\definecolor{darkgreen}{rgb}{0,0.5,0}
\definecolor{darkred}{rgb}{0.7,0,0}
\theoremstyle{plain}
\newtheorem{lemma}{Lemma}[section]
\newtheorem{thm}[lemma]{Theorem}
\newtheorem{prop}[lemma]{Proposition}
\newtheorem{cor}[lemma]{Corollary}
\theoremstyle{definition}
\newtheorem{defn}[lemma]{Definition}
\newtheorem{rmk}[lemma]{Remark}
\numberwithin{equation}{section}
\newcommand{\pl}[2]{{\frac{\partial #1}{\partial #2}}}
\newcommand{\al}{\alpha}
\newcommand{\ga}{\gamma}
\newcommand{\de}{\delta}
\newcommand{\om}{\omega}
\newcommand{\Om}{\Omega}
\newcommand{\la}{\lambda}
\newcommand{\La}{\Lambda}
\newcommand{\si}{\sigma}
\newcommand{\Si}{\Sigma}
\newcommand{\Tau}{{\cal T}}  
\renewcommand{\th}{\theta}
\newcommand{\ep}{\varepsilon}
\newcommand{\Ups}{\Upsilon}
\newcommand{\R}{\ensuremath{{\mathbb R}}}
\newcommand{\N}{\ensuremath{{\mathbb N}}}
\newcommand{\C}{\ensuremath{{\mathbb C}}}
\newcommand{\weakto}{\rightharpoonup}
\newcommand{\downto}{\downarrow}
\newcommand{\intersect}{\cap}
\newcommand{\union}{\cup}
\DeclareMathOperator{\inj}{inj}
\DeclareMathOperator{\Proj}{Proj}
\newcommand{\norm}[1]{\Vert#1\Vert}  
\def\osc{\mathop{{\mathrm{osc}}}\limits}
\newcommand{\beq}{\begin{equation}}
\newcommand{\eeq}{\end{equation}}
\newcommand{\beqs}{\begin{equation*}}
\newcommand{\eeqs}{\end{equation*}}
\newcommand{\beqa}{\begin{equation}\begin{aligned}}
\newcommand{\eeqa}{\end{aligned}\end{equation}}
\newcommand{\beqas}{\begin{equation*}\begin{aligned}}
\newcommand{\eeqas}{\end{aligned}\end{equation*}}
\newcommand{\brmk}{\begin{rmk}}
\newcommand{\ermk}{\end{rmk}}
\newcommand{\partref}[1]{\hbox{(\csname @roman\endcsname{\ref{#1}})}}
\newcommand{\half}{\frac{1}{2}}
\newcommand*\tr{\mathop{\mathrm{tr}}\nolimits}
\newcommand*\arsinh{\mathop{\mathrm{arsinh}}\nolimits}
\newcommand{\pt}{\partial_t}
\newcommand{\M}{\ensuremath{{\mathcal M}}_{-1}}
\newcommand{\abs}[1]{\vert#1\vert}
\newcommand{\Col}{\mathcal{C}}
\newcommand{\Cyl}{{\mathscr{C}}}
\newcommand{\thin}{\text{-thin}}
\newcommand{\thick}{\text{-thick}}
\title{{\sc
Refined asymptotics of the Teichm\"uller harmonic map flow into general targets
}
\\ 
}
\author{Tobias Huxol, Melanie Rupflin and Peter M. Topping}
\date{\today}
\begin{document}
\maketitle
\begin{abstract}
The Teichm\"uller harmonic map flow is a gradient flow for the harmonic map energy of maps from a closed surface to a general closed Riemannian target manifold of any dimension, where both the map and the domain metric are allowed to evolve.
Given a weak solution of the flow that exists for all time $t\geq 0$, we find a sequence of times $t_i\to\infty$ at which the flow at different scales converges to a collection of branched minimal immersions with no loss of energy.
We do this by developing a compactness theory, establishing no loss of energy, for sequences of almost-minimal maps.

Moreover, we construct an example of a smooth flow for which the image of the limit branched minimal immersions is disconnected. In general, we show that the necks connecting the images of the branched minimal immersions become arbitrarily thin as $i\to\infty$. 
\end{abstract}


\section{Introduction}

The harmonic map flow of Eells and Sampson \cite{ES} in two dimensions is the gradient flow of the 
 harmonic map 
energy $E(u)$ of a map $u$ from a smooth closed oriented Riemannian surface $(M,g)$ of genus $\gamma$ to 
a smooth compact Riemannian manifold $N=(N,G)$ of any dimension. Here the energy $E(u)$ is defined by
$$E(u)=E(u,g):=\half\int_M |du|^2_g dv_g.$$
If we also allow the domain metric $g$ to vary, and restrict $g$ to have fixed constant curvature (and fixed area) 
then we end up with the Teichm\"uller harmonic map flow introduced in \cite{RT}. 
This is the flow given, for fixed parameter $\eta>0$, by 
\begin{equation}
\label{flow}
\pl{u}{t}=\tau_g(u);\qquad \pl{g}{t}=\frac{\eta^2}{4} Re(P_g(\Phi(u,g))),
\end{equation}
where $\tau_g(u):=\tr \nabla du$ is the tension field of $u$, $P_g$ is the $L^2$-orthogonal projection from the space of quadratic differentials on $(M,g)$ onto the space of \emph{holomorphic} quadratic differentials, and
\beq
\Phi(u,g)=(|u_x|^2-|u_y|^2-2i\langle u_x,u_y\rangle)dz^2 \label{def:hopfdiff}
\eeq
is the Hopf differential, written in terms of a local complex coordinate $z=x+iy$. 
See \cite{RT} for further information.
Note that one can read off directly that the curvature of $g$ remains fixed under this flow because \eqref{flow} makes $\pl{g}{t}$ trace-free and divergence-free \cite[Proposition 2.3.9]{RFnotes}.
For $M=S^2$, the only holomorphic quadratic differential is identically zero, so $\pl{g}{t}\equiv 0$ in this case and we are reduced to the classical harmonic map flow.
In general, the flow decreases the energy according to
\beqa
\label{energy-identity}
\frac{dE}{dt}&=-\int_M 
\left[|\tau_g(u)|^2+\left(\frac{\eta}{4}\right)^2 |Re(P_g(\Phi(u,g)))|^2\right]dv_g\\
&=-\|\pt u\|_{L^2}^2-\frac{1}{\eta^2}\|\pt g\|_{L^2}^2.
\eeqa

The global existence theory for this flow, especially \cite{RTnonpositive, RTcontinuation, Ding-Li-Liu} leads naturally to the question of understanding the asymptotics of a given smooth flow as $t\to\infty$, and it is this question that we address in this paper.

Assume for the moment that $M$ has genus $\gamma\geq 2$, so $g$ flows within the space $\M$ of hyperbolic metrics (i.e. Gauss curvature everywhere $-1$).
If we restrict to the case that 
the length $\ell(g(t))$ of the shortest closed geodesic in the domain $(M,g(t))$ is bounded from below by some $\ep>0$ uniformly as $t\to\infty$ 
(which corresponds to no collar degeneration, even as $t\to\infty$) it was proved in 
\cite{RT} that the maps $u(t)$ subconverge, after reparametrisation, to a branched minimal immersion (or a constant map)
with the same action on $\pi_1$ as the initial map $u_0$. The 
same result also applies in the case of $M$ being a torus with 
the length $\ell(g(t))$ of the shortest closed geodesic
bounded away from zero, as follows from the work in \cite{Ding-Li-Liu} when combined with the
Poincar\'e inequality on nondegenerate tori proved in \cite{RT2}.
(See also Remark \ref{rem:Ding-Li-Liu}.)

On the other hand, on surfaces of genus $\gamma\geq 2$ and in the case that $\liminf_{t\to\infty}\ell(g(t))=0$, 
an initial description of the asymptotics of the flow was given in \cite{RTZ}. 
Loosely speaking, it was shown that the surface $(M,g(t))$ will degenerate into finitely many lower genus surfaces, 
with the map $u(t)$ subconverging (modulo bubbling) to branched minimal immersions (or constant maps) on each of these components.
It is convenient for us here to split the main result in \cite{RTZ} 
into an initial result extracting a sequence of times $t_i\to\infty$ at which the flow $(u,g)$ is a sequence of \emph{almost-minimal maps}, and 
a separate compactness result for such sequences.


\begin{defn}
\label{almost_min_def}
Given an oriented closed surface $M$, a closed Riemannian manifold $(N,G)$, and a pair of sequences $u_i:M\to N$ of smooth maps and $g_i$ of metrics on $M$ with fixed constant curvature and fixed area, we say that $(u_i,g_i)$ is a sequence of almost-minimal maps if $E(u_i,g_i)$ is uniformly bounded and
\beq \label{ass:almost-minimal}
\|\tau_{g_i}(u_i)\|_{L^2(M,g_i)}\to 0, \qquad\text{ and }\qquad
\|P_{g_i}(\Phi(u_i,g_i))\|_{L^2(M,g_i)}\to 0.
\eeq
\end{defn}
For a justification of this terminology, note that using the construction in \cite{RT} -- i.e. viewing the harmonic map energy $E$ as a functional on the space of maps and constant curvature metrics modulo diffeomorphisms isotopic to the identity,  equipped with the natural analogue of the Weil-Petersson metric -- the gradient of $E$ at points represented by $(u_i,g_i)$ converges to zero, while critical points are branched minimal immersions (see \cite{RT}).
We are primarily concerned with the case that $M$ has genus at least two, in which case each $g_i$ will be expected to lie in the space $\M$ of hyperbolic metrics.

\begin{prop}
\label{get_ti}
Given an oriented closed surface $M$, a closed Riemannian manifold $(N,G)$, and a smooth flow $(u,g)$ solving \eqref{flow}
for which $\liminf_{t\to\infty}\ell(g(t))=0$, there exists a sequence $t_i\to\infty$ such that
$\lim_{i\to\infty}\ell(g(t_i))=0$ and
$(u(t_i),g(t_i))$ is a sequence of almost-minimal maps.
\end{prop}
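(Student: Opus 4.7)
The plan is to use the two dissipation terms in the energy identity \eqref{energy-identity} to produce candidate ``almost-minimal'' times via pigeonhole, and then invoke a Lipschitz estimate for length functions with respect to the Weil--Petersson metric on Teichm\"uller space to arrange $\ell(g(t_i))\to 0$ at the same times. Since $E(u(t),g(t))$ is non-increasing along the flow and bounded below by $0$, integrating \eqref{energy-identity} in time gives $\int_0^\infty\|\tau_{g(t)}(u(t))\|_{L^2}^2\dt<\infty$ and $\int_0^\infty\|\pt g(t)\|_{L^2}^2\dt<\infty$. The evolution equation $\pt g=\tfrac{\eta^2}{4}\mathrm{Re}(P_g\Phi(u,g))$ together with the standard identity $\|\mathrm{Re}\,\psi\|_{L^2}^2=\tfrac{1}{2}\|\psi\|_{L^2}^2$ for holomorphic quadratic differentials $\psi$ upgrades the second bound to $\int_0^\infty\|P_{g(t)}(\Phi(u(t),g(t)))\|_{L^2}^2\dt<\infty$. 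Writing $F(t)$ for the sum of the two $L^2$-norms squared appearing in \eqref{ass:almost-minimal}, we therefore have $\int_0^\infty F\dt<\infty$.

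Choose $s_j\to\infty$ with $\ell(g(s_j))\to 0$ (possible by hypothesis), and set $I_j:=[s_j,s_j+1]$ and $\alpha_j:=\int_{I_j}F\dt$, which tends to $0$ as $j\to\infty$ as a tail of a convergent integral. Markov's inequality then provides $G_j:=\{t\in I_j:F(t)\leq\sqrt{\alpha_j}\}$ with measure $|G_j|\geq 1-\sqrt{\alpha_j}$, so certainly $G_j$ is nonempty for $j$ large, and any choice $t_j\in G_j$ renders $(u(t_j),g(t_j))$ almost-minimal in the sense of Definition \ref{almost_min_def}. The task is thus reduced to arranging $\ell(g(t_j))\to 0$ as well, i.e.\ to synchronising the ``small-energy-dissipation'' times with the ``short-geodesic'' times.

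This synchronisation is the main (and only nontrivial) obstacle, and it amounts to showing that $\ell(g(t))$ stays close to $\ell(g(s_j))$ throughout the whole interval $I_j$, not only at $t=s_j$. For this I would invoke the classical Wolpert--Yamada Lipschitz estimate: for each simple closed curve $\gamma$ on $M$, the function $g\mapsto\sqrt{\ell_\gamma(g)}$ on the Teichm\"uller space of $M$ is Lipschitz with respect to the Weil--Petersson distance, with a constant $C$ \emph{independent} of $\gamma$. Since $\pt g$ is trace-free and divergence-free (as noted in the introduction), $\|\pt g\|_{L^2}$ coincides up to a universal factor with the Weil--Petersson norm of the tangent vector $\pt g\in T_{g(t)}\M$, and Cauchy--Schwarz yields
\beqs
\bigl|\sqrt{\ell_\gamma(g(t))}-\sqrt{\ell_\gamma(g(s_j))}\bigr|\leq C\int_{s_j}^{t}\|\pt g\|_{L^2}\dtau\leq C'\sqrt{\alpha_j}\longrightarrow 0
\eeqs
uniformly in $t\in I_j$ and $\gamma$. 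Taking the infimum over $\gamma$ gives $\ell(g(t))\to 0$ uniformly for $t\in I_j$, so in particular $\ell(g(t_j))\to 0$ for every choice $t_j\in G_j$, producing the desired sequence.
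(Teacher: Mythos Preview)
Your proof is correct and follows essentially the same route as the paper's ``alternative, any genus'' argument: both use integrability of the dissipation to find candidate times and then control the variation of $\ell$ over unit time intervals via the Weil--Petersson speed $\|\pt g\|_{L^2}$ and Cauchy--Schwarz. You make the final step more explicit by invoking the Wolpert Lipschitz bound for $\sqrt{\ell_\gamma}$, whereas the paper phrases it as ``oscillation of $\ell$ would force definite WP displacement'' and refers to \cite[\S4.4]{RTnonpositive}; the paper additionally records a more direct argument in genus $\geq 2$ via the pointwise bound $|\frac{d}{dt}\ell(g(t))|\leq C$ from \cite[Lemma~2.3]{RTnonpositive}.
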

This standard observation is proved, for completeness, in Section \ref{sect3}.
The sequence found in this proposition can be analysed with the following result, which is effectively what is proved in \cite{RTZ} (modulo minor adjustments to Section 3 of that paper).

\begin{thm}[Content from \cite{RTZ}]
\label{thm:asymptotics1_new}
Suppose we have an oriented closed surface $M$ of genus $\gamma\geq 2$, a closed Riemannian manifold $(N,G)$, and a sequence $(u_i,g_i)$ of  almost-minimal maps in the sense of Definition \ref{almost_min_def}, for which 
$\lim_{i\to\infty}\ell(g_i)=0$.

Then after passing to a subsequence, there exist 
an integer $1\leq k\leq 3(\gamma-1)$ and a
hyperbolic punctured surface $(\Si,h,c)$ with $2k$ punctures (i.e. a closed Riemann surface $(\hat\Si,\hat c)$, possibly disconnected, that has been punctured $2k$ times and then equipped with a conformal complete hyperbolic metric $h$) such that the following holds.
\begin{enumerate}
\item The surfaces $(M,g_i,c_i)$ converge to the surface $(\Si,h,c)$ by collapsing $k$ simple closed geodesics $\sigma^{j}_{i}$
in the sense of Proposition \ref{Mumford} from the appendix; in particular there is a sequence of diffeomorphisms $ f_i:\Si\to M\setminus \cup_{j=1}^k \sigma^{j}_{i}$
such that $$f_i^*g_i\to h \text{ and } f_i^*c_i\to c \text{ smoothly locally, }$$
where $c_i$ denotes the complex structure of $(M,g_i)$.
\item The maps 
$U_i:=u_i\circ f_i$ converge to a limit $u_\infty$ weakly in $H^1_{loc}(\Si)$ and weakly in $H_{loc}^2(\Si\setminus S)$ as well as strongly in $W_{loc}^{1,p}(\Si\setminus S)$,
$p\in [1,\infty)$,
away from a finite set of points $S\subset \Si$ at which
energy concentrates.
\item 
The limit $u_\infty:\Si\to N$ extends to a smooth branched minimal immersion (or constant map) on each component of the compactification $(\hat\Si,\hat c)$ of $(\Sigma,c)$ obtained by filling in each of the $2k$ punctures.
\end{enumerate}
\end{thm}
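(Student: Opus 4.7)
The plan is to combine three well-established techniques: a Deligne--Mumford type compactness for the hyperbolic metrics, $\ep$-regularity and bubble-point extraction for almost-harmonic maps, and removable singularity theorems for weakly conformal harmonic maps, together with an argument pairing the vanishing projections $P_{g_i}\Phi(u_i,g_i)$ against suitably lifted test differentials. First I would apply Proposition \ref{Mumford} to $g_i$. Since $\ell(g_i)\to 0$, at least one simple closed geodesic collapses, and the upper bound $3(\gamma-1)$ on the number of pairwise disjoint simple closed geodesics on a genus $\gamma$ hyperbolic surface forces the number of collapsing geodesics to lie in $[1,3(\gamma-1)]$. A subsequence then yields the limiting hyperbolic punctured surface $(\Si,h,c)$, the collapsing geodesics $\sigma^j_i\subset M$ and diffeomorphisms $f_i:\Si\to M\setminus\bigcup_j\sigma^j_i$ with $f_i^*g_i\to h$ and $f_i^*c_i\to c$ smoothly locally, settling part (1).

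Second, set $U_i:=u_i\circ f_i$. The uniform energy bound and the local smooth convergence of metrics give a uniform $H^1_{loc}(\Si,h)$ bound on $U_i$, so after a subsequence we obtain a weak $H^1_{loc}$ limit $u_\infty$. Define the concentration set
\[
S=\Bigl\{x\in\Si\,:\,\liminf_{i\to\infty}\int_{B_r(x)}|dU_i|^2\,dv_{f_i^*g_i}\geq\ep_0\text{ for every } r>0\Bigr\},
\]
where $\ep_0>0$ is the small-energy regularity threshold for almost-harmonic maps; the uniform energy bound forces $S$ to be finite. Since $\tau_{f_i^*g_i}(U_i)$ is the pull-back of $\tau_{g_i}(u_i)$, we have $\|\tau_{f_i^*g_i}(U_i)\|_{L^2}\to 0$, so $\ep$-regularity gives uniform $H^2_{loc}(\Si\setminus S)$ bounds on $U_i$. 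Rellich compactness then upgrades to weak $H^2_{loc}$ and strong $W^{1,p}_{loc}$ convergence on $\Si\setminus S$, yielding part (2) and showing that $u_\infty$ is smooth and harmonic there. Finite energy will then allow the Sacks--Uhlenbeck removable singularity theorem to extend $u_\infty$ smoothly across each point of $S$, and the analogous removal of finite-energy singularities at each cusp (a cusp being conformally a punctured disc) will extend $u_\infty$ to a smooth harmonic map on each component of $(\hat\Si,\hat c)$. This delivers part (3), conditional on the crucial fact that $u_\infty$ is weakly conformal, since harmonic plus weakly conformal is exactly branched minimal immersion (or constant map).

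The hard part, and the main obstacle, is therefore showing $\Phi(u_\infty,h)\equiv 0$. Since $u_\infty$ is harmonic, its Hopf differential is holomorphic on $\Si\setminus S$, extends across $S$ by finite-energy ($L^1$) control, and belongs to the finite-dimensional space $\Hol(\Si,h)$ of $L^2$ holomorphic quadratic differentials on the cusped surface. To show it vanishes I would pair it against an arbitrary $\Psi\in\Hol(\Si,h)$. Following the strategy developed in \cite{RTZ} for holomorphic quadratic differentials on degenerating surfaces, one constructs lifts $\Psi_i\in\Hol(M,g_i)$ that are uniformly $L^2(M,g_i)$-bounded, converge via $(f_i)_*$ to $\Psi$ in $L^2_{loc}(\Si,h)$, and carry only negligible $L^2$-mass on the thin collars. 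Then
\[
\int_M\langle \Phi(u_i,g_i),\Psi_i\rangle\,dv_{g_i}=\int_M\langle P_{g_i}\Phi(u_i,g_i),\Psi_i\rangle\,dv_{g_i}\to 0
\]
by Cauchy--Schwarz and the almost-minimality hypothesis. The strong $W^{1,p}_{loc}$ convergence of $U_i$ on $\Si\setminus S$ identifies the limit of the left-hand side with $\int_\Si\langle\Phi(u_\infty,h),\Psi\rangle\,dv_h$, provided that the collar contributions to $\int_M\langle\Phi(u_i,g_i),\Psi_i\rangle\,dv_{g_i}$ vanish in the limit: this is the technical heart of the argument and requires the uniform energy bound on $u_i$ (controlling $\|\Phi(u_i,g_i)\|_{L^1}$ over the collars) together with sharp decay of $\Psi_i$ there. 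Once established, $\Phi(u_\infty,h)$ is $L^2$-orthogonal to all of $\Hol(\Si,h)$; since it itself lies in that space, it vanishes, and the proof is complete.
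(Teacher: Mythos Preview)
The paper does not itself prove this theorem; it is quoted as ``content from \cite{RTZ}'' with no proof given beyond that citation. Your outline of parts (1)--(2) and the removable-singularity step in (3) is the standard route and matches what one expects from \cite{RTZ}: Deligne--Mumford compactness for the metrics, $\ep$-regularity for almost-harmonic maps to define $S$ and upgrade convergence, then Sacks--Uhlenbeck across $S$ and the punctures.

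Where your proposal diverges is in the proof of weak conformality of $u_\infty$. You propose to lift each $\Psi\in\Hol(\Si,h)$ to holomorphic $\Psi_i$ on $(M,g_i)$ and pair with $\Phi(u_i,g_i)$. This can be made to work, but the construction of such lifts with the required uniform $L^2$ bounds and collar decay is itself a substantial piece of degenerating-surface analysis, and your control of the collar term (only an $L^1$ bound on $\Phi(u_i,g_i)$ from the energy, against ``sharp decay'' of $\Psi_i$) is exactly where the delicacy lies. By contrast, the present paper isolates a much cleaner mechanism in Lemma~\ref{hopf_decay_lem}: the uniform Poincar\'e estimate of \cite{RT2},
\[
\|\Phi-P_g(\Phi)\|_{L^1}\leq C\|\overline\partial\Phi\|_{L^1}\leq C E_0^{1/2}\|\tau_g(u)\|_{L^2},
\]
together with $\|P_{g_i}(\Phi(u_i,g_i))\|_{L^2}\to 0$, gives $\|\Phi(u_i,g_i)\|_{L^1(M,g_i)}\to 0$ directly. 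Since the $L^1$-norm of a quadratic differential is conformally invariant and $U_i\to u_\infty$ strongly in $W^{1,p}_{loc}(\Si\setminus S)$, this immediately forces $\Phi(u_\infty,h)=0$ on $\Si\setminus S$, hence everywhere. This bypasses entirely the need to lift holomorphic differentials from $\Si$ back to $M$ and to analyse their collar behaviour; the Poincar\'e estimate is applied globally on $(M,g_i)$ with a constant depending only on the genus. Your approach trades this single global estimate for a construction-and-pairing argument that is correct in spirit but carries more overhead.
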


In this paper, we take this analysis of the asymptotics 
of $(u_i,g_i)$
and we refine it in several ways. 
First, after passing to a further subsequence, we extract all bubbles that can develop. What is well understood is that we can extract bubbles at each of the points in $S$ (where possibly multiply many bubbles can develop).
Our first task is to isolate a new set of bubbles that are disappearing into the $2k$ punctures found in Theorem \ref{thm:asymptotics1_new}, or equivalently (as we describe below), being lost down the one or more collars that develop in the domain $(M,g_i)$ as $i\to\infty$.

Having extracted the complete set of bubbles, we show that 
the chosen subsequence 
enjoys a no-loss-of-energy property in which the limit 
$\lim_{i\to\infty}E(u_i,g_i)$ 
is shown to be precisely equal to the sum of the energies (or equivalently areas) of the branched minimal immersions found in Theorem \ref{thm:asymptotics1_new} and the new branched minimal immersions represented by the bubbles.
A special case of what we prove below in Theorem \ref{mainthm}, combined with existing theory, is the following result.
(Recall that $\de\thick(M,g)$ consists of all points in $M$ at which the injectivity radius is at least $\de$. Its complement is $\de\thin(M,g)$.)

\begin{thm}
\label{energy_id_thm}
In the setting of Theorem \ref{thm:asymptotics1_new}, there exist two finite collections of nonconstant harmonic maps $\{\om_k\}$ and $\{\Om_j\}$ mapping $S^2 \to N$, such that after passing to a subsequence we have
$$\lim_{\de\downto 0}\lim_{i\to\infty} 
%
E\left(u_i,g_i;\de\thick\,(M,g_i)\right)
=E(u_\infty,h)
+\sum_{k}E(\om_k),$$
and 
$$\lim_{i\to\infty} E(u_i,g_i)=E(u_\infty,h)
+\sum_{k}E(\om_k)+\sum_{j}E(\Om_j).$$
\end{thm}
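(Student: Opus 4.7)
The plan is to split the total energy of $(u_i,g_i)$ according to the injectivity radius. The $\de$-thick part maps diffeomorphically to a precompact region of the limit punctured surface $(\Si,h)$, and on such regions the pullback metrics $f_i^*g_i$ converge smoothly to $h$, so the first equality should reduce to a classical bubbling statement for almost-harmonic maps on compact domains with smoothly convergent background metrics. The $\de$-thin part consists of hyperbolic collars around the $k$ collapsing geodesics $\si^j_i$; the additional bubbles $\{\Om_j\}$ and the corresponding no-loss-of-energy will arise from a separate analysis on these collars, where the hypothesis $\|P_{g_i}\Phi(u_i,g_i)\|_{L^2}\to 0$ is essential.

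For the first equality I would work with $U_i=u_i\circ f_i$ on $\Si$. On any precompact $\Om\Subset\Si$ one has $f_i^*g_i\to h$ in $C^\infty$, and the almost-minimal condition passes to the pullback so that $\|\tau_{f_i^*g_i}(U_i)\|_{L^2(f_i^*g_i)}\to 0$. Thus the $U_i$ form a sequence of almost-harmonic maps with respect to smoothly convergent metrics. Passing to a subsequence, the Ding--Tian / Qing--Tian bubble-tree construction yields finitely many nonconstant bubbles $\om_k:S^2\to N$ concentrating at a finite set of points $x_k\in\Si$ with
\[
\lim_{i\to\infty}E(U_i,f_i^*g_i;\Om)=E(u_\infty,h;\Om)+\sum_{k:\,x_k\in\Om}E(\om_k).
\]
Finiteness of $\{\om_k\}$ follows from the uniform energy bound together with the lower bubble energy quantum. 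Since the $\de$-thick subsets of $(\Si,h)$ exhaust $\Si$ as $\de\downto 0$ and correspond, via $f_i$, to the $\de$-thick parts of $(M,g_i)$ modulo negligible cusp boundary effects, the first identity is obtained in the limit $\de\downto 0$.

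For the second equality I need to recover the energy lost into the thin part. Each thin component is a standard hyperbolic collar, conformally a long flat cylinder whose length is comparable to $\pi^2/\ell(g_i)\to\infty$. On such cylinders the smallness of $P_{g_i}\Phi(u_i,g_i)$ controls the horizontal energy flux in the same way that $\Phi\equiv 0$ does for honest harmonic maps; together with the smallness of the tension field this is exactly the input required by the collar neck analysis of the forthcoming Theorem \ref{mainthm}. That theorem extracts, after passing to a further subsequence, a finite collection of nonconstant spherical bubbles $\{\Om_j\}$ attached to the collars and establishes no loss of energy in the necks between successive scales, giving $\lim_i E(u_i,g_i;\de\thin(M,g_i))=\sum_j E(\Om_j)$ in the double limit $i\to\infty$ followed by $\de\downto 0$. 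Combining the two identities proves the claim.

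The main obstacle is the collar analysis feeding into Theorem \ref{mainthm}. The standard neck argument for harmonic maps relies on $\Phi\equiv 0$ via a three-annulus or Pohozaev-type identity, whereas here only the $L^2$-projection of $\Phi$ onto the finite-dimensional space of holomorphic quadratic differentials is small; the remainder of $\Phi$ must be controlled independently on each collar. One expects this to come from combining the long-cylinder geometry with the almost-harmonic equation for $u_i$ to show that the full Hopf differential is small in a suitably weighted norm across each neck region, a bound that must be strong enough to rule out both slow angular rotation of the map along the collar and the more subtle scenario of energy accumulating at intermediate scales inside the degenerating cylinder. This quantitative collar control is the technical heart of the paper and the only non-routine ingredient in the proof plan above.
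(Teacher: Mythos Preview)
Your overall architecture matches the paper: split into $\de$-thick and $\de$-thin, handle the thick part by the classical bubble-tree compactness (Theorem~\ref{existing_theory}) pulled back via the $f_i$, and feed the collars into Theorem~\ref{mainthm}. The thick-part argument and the reduction of the thin-part statement to Theorem~\ref{mainthm} are essentially as in the paper (cf.\ Remark~\ref{rmk:first-part-proof} and the short proof in Section~\ref{sect3}).

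The one substantive point where your proposal diverges is the mechanism for controlling the Hopf differential on the collars. You speculate that this comes from ``combining the long-cylinder geometry with the almost-harmonic equation'' to get a local weighted bound on $\Phi$ across each neck. That is not how the paper proceeds, and indeed a purely local argument on a single collar would not suffice: examples of Parker and Zhu show that energy \emph{can} be lost on degenerating collars for sequences of harmonic maps, so smallness of the tension alone cannot force $\Phi$ to be small there. The paper instead applies a \emph{global} Poincar\'e inequality for quadratic differentials on the whole closed surface $(M,g_i)$,
\[
\|\Phi-P_{g_i}(\Phi)\|_{L^1(M,g_i)}\le C\,\|\bar\partial\Phi\|_{L^1(M,g_i)},
\]
with $C$ depending only on the genus and \emph{not} on the degenerating metric. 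Since $\|\bar\partial\Phi(u_i,g_i)\|_{L^1}\le CE_0^{1/2}\|\tau_{g_i}(u_i)\|_{L^2}\to 0$ and $\|P_{g_i}\Phi\|_{L^1}\to 0$ by almost-minimality, one obtains $\|\Phi(u_i,g_i)\|_{L^1(M,g_i)}\to 0$ globally (Lemma~\ref{hopf_decay_lem}); restricting to each collar then gives exactly the hypothesis~\eqref{ass:Hopf-0} needed for the no-loss-of-energy part of Theorem~\ref{new-thm}. So the ``quantitative collar control'' you flag as the technical heart is not a local neck estimate at all, but this uniform global Poincar\'e estimate from~\cite{RT2}.
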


Showing that no loss of energy occurs in intermediate regions around the bubbles developing at points in $S$ is standard, following in particular the work of Ding and Tian \cite{DT} we describe in a moment, although 
one could also use energy decay estimates of the form we prove
in this paper. 
However, showing that no energy is lost near the $2k$ punctures, away from where the
bubbles develop, is different, and a key ingredient is the Poincar\'e estimate for quadratic differentials discovered in \cite{RT2}, which is applied globally, not locally where the energy is being controlled. 
In this step we exploit the 
smallness of $P_g(\Phi(u,g))$ that holds for almost-minimal maps.
That this is essential is demonstrated by the work of T. Parker \cite{parker} and M. Zhu \cite{zhu_mathz}, which established that energy \emph{can} be lost along `degenerating collars' in general sequences of harmonic maps from degenerating domains.

The following is the foundational compactness result when the domain is fixed,
cf. \cite{struweCMH, DT, QingTian, LinWang, toppingAnnals, finwind}.

\begin{thm}
\label{existing_theory}
Suppose $(\Ups,g_0)$ is a fixed surface, possibly noncompact, possibly incomplete, and let $u_i$ be a sequence of smooth maps into $(N,G)$ from either $(\Ups,g_0)$, or more generally from a sequence of subsets $\Ups_i\subset\Ups$ that exhaust $\Ups$. Suppose that $E(u_i)\leq E_0$ and that
$\|\tau_{g_0}(u_i)\|_{L^2}\to 0$ as $i\to\infty$. Then there is a subsequence for which the following holds true.

There exist a smooth harmonic map $u_\infty:(\Ups,g_0)\to (N,G)$ (possibly constant) and a finite set of points $S\subset\Ups$ such that 
 we have, as $i\to\infty$,
$$u_i\to u_\infty\quad\text{in }W^{2,2}_{loc}(\Ups\backslash S,N),\text{ and}$$
$$u_i\weakto u_\infty\quad\text{weakly in }W^{1,2}_{loc}(\Ups,N).$$
%

At each point in $S$, a bubble tree develops in the following sense.
After picking local isothermal coordinates centred at the given point in $S$, 
there exist a finite number of nonconstant harmonic maps $\om_j:S^2\to (N,G)$, for $j\in \{1,\ldots, J\}$, $J\in\N$ (so-called bubbles) which we view as maps from $\R^2\union \{\infty\}$ via stereographic projection, and sequences of 
numbers $\la_i^j\downto 0$ 
and coordinates $a^j_i\to 0\in\R^2$, 
such that 
$$u_i\left(a^j_i+\la^j_ix\right)\weakto\om_j\qquad\text{weakly in }W_{loc}^{1,2}(\R^2, N).$$
Moreover, we do not count bubbles more than once in the sense that 
\beq \label{eq:bubbles-seperate}\frac{\la_i^j}{\la_i^k}+\frac{\la_i^k}{\la_i^j}
+\frac{|a_i^j-a_i^k|^2}{\la_i^j\la_i^k}\to\infty,\eeq
for each $j,k\in \{1,\ldots, J\}$ with $j\neq k$.

The bubbling has no energy loss in the sense that for each point $x_0\in S$ analysed as above, and each neighbourhood $U\subset\subset \Ups$ of $x_0$ such that $\overline U \intersect S=\{x_0\}$ only,
we have
$$\lim_{i\to\infty} 
E\left(u_i;U\right)=E(u_\infty;U)+\sum_{j}E(\om_j).$$
Moreover, the bubbling enjoys the no-necks property
\beq \label{eq:strong-conv-bubble-tree} u_i(x)-\sum_j \left(\om_j\left(
\frac{x-a^j_i}{\la_i^j}
\right)
-\om_j(\infty)\right)
\to u_\infty(x)\eeq
in $L^\infty(U)$ and $W^{1,2}(U)$ as $i\to\infty$.
\end{thm}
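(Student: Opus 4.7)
The plan is to adapt the classical bubble-tree compactness arguments of Struwe \cite{struweCMH}, Ding-Tian \cite{DT}, Qing-Tian \cite{QingTian} and Lin-Wang \cite{LinWang} to the present setting of almost-harmonic maps from a possibly incomplete, noncompact, fixed surface $(\Ups,g_0)$. The structure is standard, and the main obstacle will be the no-necks property \eqref{eq:strong-conv-bubble-tree}.

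The first step is an $\ep$-regularity lemma: there exists $\ep_0>0$ such that whenever $v:B_{2r}(x_0)\to N$ is smooth with $\int_{B_{2r}(x_0)}|dv|^2\dmu_{g_0}<\ep_0$, then $\|dv\|_{L^\infty(B_r(x_0))}^2$ is controlled by $r^{-2}\|dv\|_{L^2(B_{2r}(x_0))}^2+\|\tau_{g_0}(v)\|_{L^2(B_{2r}(x_0))}^2$. Applied to $u_i$, which satisfies $\lap_{g_0}u_i=\tau_{g_0}(u_i)-\Ga(u_i)(du_i,du_i)$ in isothermal coordinates, this yields uniform $W^{2,2}_{loc}$ bounds away from the singular set
$$S:=\Big\{x\in\Ups \; : \; \liminf_{r\downto 0}\liminf_{i\to\infty}\int_{B_r(x)}|du_i|^2\dmu_{g_0}\geq\ep_0\Big\},$$
which is finite by the global energy bound $E(u_i)\leq E_0$. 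Strong $W^{2,2}_{loc}(\Ups\setminus S,N)$ convergence to a limit follows; the limit is harmonic because $\tau_{g_0}(u_i)\to 0$ in $L^2$, and weak $W^{1,2}_{loc}(\Ups,N)$ convergence follows from the energy bound and local exhaustion.

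At each $x_0\in S$ I would extract a bubble by a concentration rescaling. In isothermal coordinates centred at $x_0$, pick $a_i\to 0$ and $\la_i\downto 0$ by a maximal-function argument so that $\int_{B_{\la_i}(a_i)}|du_i|^2=\max_{x\in B_\de(0)}\int_{B_{\la_i}(x)}|du_i|^2=\ep_0/2$. The rescaled maps $v_i(x):=u_i(a_i+\la_ix)$ then exhibit no energy concentration in fixed balls, and the conformal invariance of both the Dirichlet energy and of the $L^2$-norm of the tension in dimension two preserves the hypotheses of the theorem on growing domains exhausting $\R^2$. The previous step produces a harmonic limit $\om:\R^2\to N$, nonconstant by the lower energy bound, which extends across $\infty$ to a harmonic map $S^2\to N$ via Sacks-Uhlenbeck removable singularities since $E(\om)\leq E_0$. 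Iterating by subtracting the bubble and re-applying the procedure at any surviving concentration point in the rescaled picture, one builds the bubble tree in finitely many steps; the $\ep_0$-lower bound on each bubble's energy terminates the process, and the maximal-function selection gives \eqref{eq:bubbles-seperate}.

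The hardest step is the no energy loss and no-necks assertion \eqref{eq:strong-conv-bubble-tree}. By induction on the height of the bubble tree it suffices to treat one neck $A_i:=B_\de(0)\setminus B_{R\la_i}(0)$ (in an isothermal chart at a point of $S$) and prove
$$\lim_{\de\downto 0}\lim_{R\to\infty}\lim_{i\to\infty}\Big(E(u_i;A_i)+\osc_{A_i}u_i\Big)=0.$$
Following Qing-Tian \cite{QingTian}, I pass to cylindrical coordinates $(t,\th)\in[T_1,T_2^{(i)}]\times S^1$ conformal to $A_i$; the total energy on the cylinder is small by choice of $\de$, $R$ and $i$, so decomposing $u_i(t,\th)=\bar u_i(t)+\psi_i(t,\th)$ into slice-mean and zero-mean parts, and combining the almost-harmonic map equation with the Poincar\'e inequality on $S^1$ and a Pohozaev-type identity, gives a differential inequality $f''\geq \al^2 f-o(1)$ for the slice energy $f(t):=\int_{\{t\}\times S^1}|du_i|^2\df\th$. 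A standard comparison argument yields the exponential decay $f(t)\leq C\big(e^{-\al(t-T_1)}+e^{-\al(T_2^{(i)}-t)}\big)+o(1)$, and integrating in $t$ gives no energy loss. The $L^\infty$ and $W^{1,2}$ neck convergence in \eqref{eq:strong-conv-bubble-tree} then follows from the angular smallness of $\psi_i$ combined with $|\bar u_i(T_2^{(i)})-\bar u_i(T_1)|\leq\int|\partial_t\bar u_i|\dt$ via Cauchy-Schwarz and the exponential decay. The extra error terms produced by $\tau_{g_0}(u_i)$ throughout this analysis are absorbed using $\|\tau_{g_0}(u_i)\|_{L^2}\to 0$.
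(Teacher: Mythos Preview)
The paper does not actually prove this theorem: it is stated as the ``foundational compactness result when the domain is fixed'' and attributed via ``cf.'' to \cite{struweCMH, DT, QingTian, LinWang, toppingAnnals, finwind}, with no proof given in the text. So there is no ``paper's own proof'' to compare against; your sketch is a reasonable summary of the cited literature.

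That said, one technical point in your neck analysis deserves correction. You claim a differential inequality $f''\geq\al^2 f-o(1)$ for the \emph{full} slice energy $f(t)=\int_{\{t\}\times S^1}|du_i|^2\,d\th$. This is not the quantity that satisfies such an inequality: it is the \emph{angular} energy $\vth(t)=\int_{\{t\}\times S^1}|\partial_\th u_i|^2\,d\th$ that obeys $\vth''\geq\vth-2\int|\tau|^2$ (after using Wirtinger on $S^1$ and the small-energy hypothesis to absorb the second-fundamental-form terms). The radial contribution $\int|\partial_t u_i|^2$ is then recovered separately via the Hopf differential / Pohozaev identity, which for almost-harmonic maps on a cylinder gives $\int(|\partial_t u_i|^2-|\partial_\th u_i|^2)$ small up to boundary terms and $\|\tau\|_{L^2}$. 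This is precisely the machinery the paper develops in its own Section~2 (Lemmas~\ref{lem:angularenergy} and~\ref{lem:diffineq}) for use on the degenerating collars; those lemmas would serve equally well to carry out the neck step here. Your decomposition into slice-mean plus zero-mean part is morally the same idea, but the ODE is for the zero-mean (angular) part, not the full energy density.
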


\brmk \label{rmk:first-part-proof}
We note that the proof of the first part of Theorem \ref{energy_id_thm} (virtually) immediately follows 
from Theorems \ref{thm:asymptotics1_new} and \ref{existing_theory}: Away from $S$ we can combine the strong $H^1$- convergence of the maps with the convergence of the metrics. To analyse the maps $U_i=u_i\circ f_i$ near points in $S$ we then apply Theorem \ref{existing_theory} on small geodesic balls $B_{r}^{f_i^*g_i}(p)\subset (\Si, f_i^*g_i)$, 
which are of course isometric to one another provided $r>0$ is chosen sufficiently small as the metrics $g_i$ are all hyperbolic. Finally, the convergence of the metrics allows us to relate
the $\de\thick$ part of $(\Si,f_i^*g_i)$ to the $\de\thick$ part of $(\Sigma,h)$, compare \cite[Lemma A.7]{RTZ}, as well as the geodesic balls $B_{r}^{f_i^*g_i}(p)$ in $(\Si, f_i^*g_i)$ to geodesic balls in $(\Si,h)$.
\ermk

\brmk
\label{collar_intro_rmk}
To do more, we must recall more about the structure of sequences of degenerating hyperbolic metrics, and in particular we need the precise  description of the metrics $g_i$ near to the geodesics $\si_i^j$ of Theorem \ref{thm:asymptotics1_new}, given by the Collar Lemma \ref{lemma:collar} in the appendix.
In particular, for $\de\in (0,\arsinh(1))$ sufficiently small, the $\de\thin$ part of $(M,g_i)$ is isometric to a finite disjoint union of cylinders $\Col^{\de,j}_i:=(-X_\de(\ell^j_i),X_\de(\ell^j_i))\times S^1$ with the metric from Lemma \ref{lemma:collar}; each cylinder
has a geodesic $\si_i^j$ at the centre, with length $\ell^j_i\to 0$ as $i\to\infty$.
These initial observations motivate us to analyse in detail almost-harmonic maps from cylinders.
\ermk

\begin{defn}
\label{bb_def}
When we apply Theorem \ref{existing_theory} in the case that $(\Ups,g_0)=\R\times S^1$ is the cylinder with its standard flat metric, 
then we say that the maps $u_i$ \emph{converge to a bubble branch}, and extract bubbles $\{\Om_j\}$ as follows.
First we add all the bubbles $\{\om_j\}$ to the list $\{\Om_j\}$. In the case that 
$u_\infty:\R\times S^1\to N$ is nonconstant, we view it (via a conformal map of the domain) as a harmonic map from the twice punctured 2-sphere, remove the two singularities
(using the Sacks-Uhlenbeck removable singularity theorem \cite{SU}) to give a smooth nonconstant harmonic map from $S^2$, and add it to the list $\{\Om_j\}$. We say that $u_i$ converges to a \emph{nontrivial} bubble branch if the collection 
$\{\Om_j\}$ is nonempty.
\end{defn}

In the present paper we prove a refinement of the above convergence to a bubble branch. 
To state this result, we shall use the following notations:
For $a<b$, define $\Cyl(a,b):=(a,b)\times S^1$ to be the finite cylinder which will be equipped with the standard flat metric $g_0=ds^2+d\theta^2$ unless specified otherwise. For $\La>0$ we write for short $\Cyl_\Lambda=\Cyl(-\Lambda,\Lambda)$.
Furthermore, given sequences $a_i$ and $b_i$ of real numbers we write $a_i\ll b_i$ if $a_i<b_i$ for all $i\in\N$ and $b_i-a_i\to\infty$ as $i\to\infty$.

\begin{thm}\label{new-thm}
Let $X_i\to \infty$ and let 
 $u_i:\Cyl_{X_i}\to N$ be a sequence of smooth maps with uniformly bounded energy, $E(u_i;\Cyl_{X_i})\leq E_0<\infty$, 
which are almost harmonic in the sense that 
\beq
\label{ass:almost-harmonic}
\norm{\tau_{g_0}(u_i)}_{L^2(\Cyl_{X_i})}\to 0.
\eeq
Then after passing to a subsequence in $i$, there exist
a finite number of sequences $s_i^m$ (for $m\in \{0,\ldots,\bar m\}$, $\bar m\in \N$)
with $-X_i=: s_i^0\ll s_i^1\ll \cdots\ll s_i^{\bar m}:= X_i$   such that the following holds true.
\begin{enumerate}
\item For each $m\in \{1,\ldots,\bar m-1\}$ (if nonempty) the translated maps
$u_i^m(s,\th):=u_i(s+s_i^m,\th)$  converge to a nontrival bubble branch in the sense of 
Definition \ref{bb_def}. 
\item  
The \emph{connecting cylinders} $\Cyl(s_i^{m-1}+\la,s_i^m-\la)$, $\la$ large, are mapped near curves in the sense that
\beq \label{eq:collars-curves_new} \lim_{\la\to\infty}\limsup_{i\to\infty}\sup_{s\in (s_i^{m-1}+\la,s_i^m-\la)}\osc(u_i;\{s\}\times S^1)=0.\eeq
\item If we suppose in addition that the Hopf-differentials tend to zero
\beq
\label{ass:Hopf-0}
\norm{\Phi(u_i)}_{L^1(\Cyl_{X_i})}\to 0
\eeq
then there is no loss of energy on the connecting cylinders 
$\Cyl(s_i^{m-1},s_i^m)$ 
in the sense that for each $m\in \{1,\ldots,\bar m-1\}$
\begin{equation}
\label{noloss_statement_new}
\lim_{\la\to\infty}\limsup_{i\to\infty} E(u_i;
\Cyl(s_i^{m-1}+\la,s_i^m-\la))=0.
\end{equation}
\end{enumerate}
\end{thm}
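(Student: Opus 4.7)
\emph{Selection of centres and Part~1.} The plan is to choose the sequences $s_i^m$ by a maximal-selection argument. Let $\eps_0>0$ denote a Sacks--Uhlenbeck energy threshold, i.e.\ a lower bound on the energy of any nonconstant harmonic sphere into $N$. Define $\bar m-1$ to be the largest integer $K$ for which (after passing to a subsequence) one can find sequences $-X_i\ll s_i^1\ll\cdots\ll s_i^{K}\ll X_i$ such that each translated sequence $u_i(\cdot+s_i^m,\cdot)$ converges, after a further subsequence, to a \emph{nontrivial} bubble branch in the sense of Definition~\ref{bb_def}. Such a maximum is attained: the separation $s_i^m-s_i^{m-1}\to\infty$ asymptotically decouples the bubble-branch energies located at distinct centres, and each nontrivial branch carries at least $\eps_0$ energy, so $K\leq E_0/\eps_0$. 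Part~1 is then immediate from this construction.

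\emph{Part~2.} I would prove this by contradiction against the maximality in the previous step. If \eqref{eq:collars-curves_new} were to fail for some $m$, then, after a further subsequence, there would exist $\delta>0$, $\la_i\to\infty$, and points $\tilde s_i\in(s_i^{m-1}+\la_i,s_i^m-\la_i)$ with $\osc(u_i;\{\tilde s_i\}\times S^1)\geq\delta$. Applying Theorem~\ref{existing_theory} to $\tilde u_i(s,\theta):=u_i(s+\tilde s_i,\theta)$ on the exhausting domains $\Cyl(-\la_i,\la_i)\subset\R\times S^1$ produces, on a further subsequence, a limiting bubble branch. A \emph{nontrivial} such branch would allow us to insert $\tilde s_i$ into our list of centres, contradicting maximality; a \emph{trivial} branch means no spherical bubbles appear and the weak limit on $\R\times S^1$ is constant, so the strong $W^{2,2}_{loc}$-convergence upgrades to $C^0_{loc}$ via the two-dimensional Sobolev embedding $W^{2,2}\hookrightarrow C^0$, forcing $\osc(\tilde u_i;\{0\}\times S^1)\to 0$ and again contradicting $\osc\geq\delta$.

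\emph{Part~3 and the main obstacle.} The hardest step is Part~3, where the hypothesis $\|\Phi(u_i)\|_{L^1}\to 0$ enters decisively. The algebraic input is the identity $\Re\phi_i=|\partial_s u_i|^2-|\partial_\theta u_i|^2$ for $\phi_i\,dz^2=\Phi(u_i)$, which together with the $L^1$-smallness of $\Phi(u_i)$ implies that on any subcylinder the $s$- and $\theta$-energies $E_s,E_\theta$ of $u_i$ differ by $o(1)$; it therefore suffices to control the angular energy $E_\theta$ on the connecting cylinder. With Part~2 in hand, the image of each circle in the connecting region lies in a small geodesic ball of $N$, so I would decompose $u_i=\bar u_i(s)+v_i(s,\theta)$ with $v_i$ of zero circle-average and $\|v_i\|_{L^\infty}\to 0$. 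Testing the almost-harmonic equation against $v_i$ and integrating by parts in the $\theta$-direction yields a schematic estimate
$$\int_{\Cyl}|\partial_\theta v_i|^2\;\leq\;\|v_i\|_{L^\infty}\bigl(\|\tau_{g_0}(u_i)\|_{L^2}E_0^{1/2}+E_0\bigr)+(\text{boundary in }s)+o(1),$$
and a Fubini-type choice of the $s$-boundary slices at generic points controls the boundary contribution by the average energy; iterating on a nested sequence of subcylinders then closes the argument. The principal difficulty is that the oscillation control from Part~2 bounds only $\|v_i\|_{L^\infty}$ and not the $L^2$ norm of its derivatives, so Part~2 alone cannot produce smallness of the energy: it is the Hopf hypothesis that links the $s$- and $\theta$-direction energies together and makes the decay possible.
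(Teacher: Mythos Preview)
Your approach is correct but genuinely different from the paper's. The paper does not use a maximal-selection argument for the centres; instead it flags, for each $i$, all integer chunks $(k-1,k+2)\times S^1$ carrying energy $\geq\delta/2$ and merges those that fail to separate, so that on every connecting cylinder the local energy on length-$3$ subchunks stays below a threshold $\delta$. This sets up the paper's central analytic lemma: under such a local-smallness hypothesis the angular energy $\vartheta(s)=\int_{\{s\}\times S^1}|u_\theta|^2$ obeys the differential inequality $\vartheta''\geq\vartheta-2\int_{\{s\}\times S^1}|\tau|^2$, and an ODE comparison then gives pointwise exponential decay of $\vartheta$ away from the ends. Both Part~2 (via $\osc^2\leq 2\pi\vartheta$) and the angular half of Part~3 (via $\int\vartheta\leq Ce^{-\lambda}+2\|\tau\|_{L^2}^2$) follow immediately; the Hopf hypothesis enters only through the identity $E\leq \int\vartheta+\tfrac14\|\Phi\|_{L^1}$. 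Your route replaces this quantitative decay machinery by soft compactness in Part~2 and by a testing argument against the zero-average part $v_i$ in Part~3. Two minor corrections there: the tension term in your schematic estimate should be bounded via Wirtinger as $\|\tau\|_{L^2}\|v_i\|_{L^2}\leq\|\tau\|_{L^2}\|\partial_\theta u_i\|_{L^2}$, which is $o(1)$ directly and does not carry a $\|v_i\|_{L^\infty}$ factor; and no iteration on nested subcylinders is needed, since a single application with Fubini-selected boundary slices already yields $E_\theta\leq C\|v_i\|_{L^\infty}E_0 + CE_0^{1/2}\|v_i\|_{L^\infty}+o(1)$. What the paper's method buys is an explicit exponential rate $Ce^{-\lambda}$; what yours buys is that Parts~1--2 become pure compactness, with no need for the $\vartheta''\geq\vartheta-2\Tau$ lemma at all.
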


\begin{defn}
\label{full_bub_branch_def}
In the setting of Theorem \ref{new-thm}, we abbreviate the conclusions of parts 1 and 2 by saying that the maps $u_i$ \emph{converge to a full bubble branch}.  In the case that \eqref{noloss_statement_new} also holds (i.e. the conclusion of part 3) we say that 
the maps $u_i$ \emph{converge to a full bubble branch with no loss of energy}.
\end{defn}


%
Returning to the observations of Remark \ref{collar_intro_rmk},
we note that the length of each of the cylinders 
$\Col^{\de,j}_i$
is converging to infinity, and that any fixed length portion of either end of any of these cylinders will lie within the $\hat\de\thick$ part of $(M,g_i)$ for some small $\hat\de\in (0,\de)$, and thus be captured by the limit $u_\infty$ from Theorem \ref{thm:asymptotics1_new}.
Our main no-loss-of-energy result can therefore be stated as the following result about the limiting behaviour on the middle of the collars, which constitutes our main theorem.

\begin{thm}
\label{mainthm}
In the setting of Theorem \ref{thm:asymptotics1_new}, we fix $j\in \{1,\ldots,k\}$ in order to analyse the $j^{th}$ collar surrounding the geodesic $\si^j_i$. (Now that $j$ is fixed, we drop it as a label for simplicity.)
Thus we consider the collar $\Col(\ell_i)=(-X(\ell_i),X(\ell_i))\times S^1$, with its hyperbolic metric, where $X(\ell_i)\to\infty$. 

Then after passing to a subsequence, the restriction of the maps $u_i$ to the collar $\Col(\ell_i)$ converge to a full bubble branch with no loss of energy in the sense of Definition 
\ref{full_bub_branch_def}.
\end{thm}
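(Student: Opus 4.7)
Our plan is to reduce the theorem to Theorem \ref{new-thm} by a conformal change from the hyperbolic collar $\Col(\ell_i)=(-X(\ell_i),X(\ell_i))\times S^1$ to the flat cylinder $\Cyl_{X(\ell_i)}$ with metric $g_0=ds^2+d\th^2$. By the Collar Lemma, $g_i=\rho_i^2 g_0$ for a conformal factor $\rho_i$ that is uniformly bounded above on the collar. In two dimensions, the Dirichlet energy and the $L^1$-norm of quadratic differentials are conformally invariant, so the uniform bound $E(u_i,g_i)\leqs E_0$ from almost-minimality passes directly to $g_0$, and the $L^1$ smallness of $\Phi(u_i)$ on $(\Cyl_{X(\ell_i)},g_0)$ that Theorem \ref{new-thm} requires is equivalent to the $L^1$ smallness of $\Phi(u_i,g_i)$ on $(\Col(\ell_i),g_i)$. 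The pointwise transformation law $\tau_{g_0}(u_i)=\rho_i^2\tau_{g_i}(u_i)$ combined with the $L^\infty$ bound on $\rho_i$ gives
\[
\norm{\tau_{g_0}(u_i)}_{L^2(\Cyl_{X(\ell_i)})}^2=\int_{\Col(\ell_i)}\rho_i^2|\tau_{g_i}(u_i)|^2\,dv_{g_i}\leqs C\norm{\tau_{g_i}(u_i)}_{L^2(M,g_i)}^2\to 0,
\]
handling the almost-harmonicity hypothesis of Theorem \ref{new-thm}.

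The crux of the argument, and the step I expect to be the main obstacle, is establishing $\norm{\Phi(u_i,g_i)}_{L^1(\Col(\ell_i),g_i)}\to 0$. Following the philosophy emphasised in the introduction, the plan is to control this local quantity by the global $L^1$ norm on $M$, and to decompose
\[
\Phi(u_i,g_i)=P_{g_i}(\Phi(u_i,g_i))+\Psi_i,
\]
with $\Psi_i$ $L^2$-orthogonal to the space of holomorphic quadratic differentials. The projection term tends to zero in $L^2(M,g_i)$ by almost-minimality, and hence in $L^1(M,g_i)$ as well since $M$ has fixed finite hyperbolic area. For the remainder, $\dbar\Psi_i=\dbar\Phi(u_i,g_i)$, and the classical identity expressing $\dbar\Phi(u,g)$ in terms of $\tau_g(u)$ and $u_z$ yields the pointwise bound $|\dbar\Psi_i|\lesssim|\tau_{g_i}(u_i)|\,|du_i|$; Cauchy--Schwarz with the uniform energy bound then forces $\norm{\dbar\Psi_i}_{L^1(M,g_i)}\to 0$. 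Combining this with the $L^2$-orthogonality of $\Psi_i$ to the holomorphic quadratic differentials, the Poincar\'e-type estimate of \cite{RT2}, applied globally on $(M,g_i)$, produces $\norm{\Psi_i}_{L^1(M,g_i)}\to 0$. Together these give the global smallness $\norm{\Phi(u_i,g_i)}_{L^1(M,g_i)}\to 0$, which restricts to the required smallness on $\Col(\ell_i)$.

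With all three hypotheses of Theorem \ref{new-thm} verified on the flat cylinder $\Cyl_{X(\ell_i)}$, passing to a subsequence yields convergence of the restriction of $u_i$ to $\Col(\ell_i)$ to a full bubble branch with no loss of energy in the sense of Definition \ref{full_bub_branch_def}, proving Theorem \ref{mainthm}. The genuinely delicate ingredient is the Poincar\'e estimate of \cite{RT2}: it must be robust enough under the domain degeneration $\ell_i\to 0$ to convert the orthogonality-based almost-minimality control of $P_{g_i}\Phi(u_i,g_i)$ into honest smallness of the full Hopf differential, which is what makes it possible to transport the whole analysis into any degenerating collar of interest and invoke the cylindrical compactness result.
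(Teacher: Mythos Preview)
Your proposal is correct and follows essentially the same route as the paper: reduce to Theorem~\ref{new-thm} on the flat cylinder via conformal change, using conformal invariance of the energy and of the $L^1$-norm of quadratic differentials, the uniform bound on the conformal factor $\rho_i$ to transfer the $L^2$-smallness of the tension, and the genus-uniform Poincar\'e estimate of \cite{RT2} together with the bound $\|\bar\partial\Phi\|_{L^1}\lesssim E_0^{1/2}\|\tau\|_{L^2}$ to force $\|\Phi(u_i,g_i)\|_{L^1(M,g_i)}\to 0$. The paper packages the Hopf-differential step as a separate lemma (Lemma~\ref{hopf_decay_lem}) and records the explicit bound $\rho\leqs 1$ on the collar, but the logic is identical to yours.
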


We will furthermore analyse almost-minimal maps from degenerating tori in order to obtain the refined asymptotics for the flow \eqref{flow} also in case that the genus of $M$ is one. 

To this end, we first recall that 
 any point in the moduli space of the torus can be represented uniquely as a quotient of $(\C,g_{\text{eucl}})$ with respect to a lattice $\Gamma_{2\pi,a+ib}$ generated by 
$2\pi$ and $a+ib\in\C$ with $-\pi< a \leq \pi$ and $\abs{a+ib}\geq 2\pi$ (with strict inequality if $a<0$), $b>0$.

Given a pair of sequences of maps $u_i:T^2\to N$ and of flat unit area metrics $g_i$ on $T^2$, we let $(a_i,b_i)$ be as above so that $(T^2, g_i)$ is isometric to the quotient of $(\R\times S^1, \frac{1}{2\pi b_i}g_{0})$, with $(s,\theta)$ and 
$(s+ b_i, \theta+a_i)$ identified. We will then  
lift the maps $u_i$ to maps defined on the full cylinder $\R\times S^1$ and 
extend the notion of convergence to a full bubble branch as follows.

\begin{defn}\label{def:fbb-torus}
 Let $(u_i,g_i)$ be as above and assume that  as $i\to \infty$  we have $\ell(g_i)=(\frac{2\pi}{b_i})^\half\to 0$ and 
 $$\norm{\tau(u_i)}_{L^2(\Cyl(0,b_i))}\to 0.$$
 We then say that the maps $u_i$ \textit{converge to a nontrivial full bubble branch} if there exists a 
 sequence $s_i^0$ so that the maps $u_i(s_i^0+\cdot, \cdot)$ converge to a nontrivial bubble branch and 
 so that the restrictions of $u_i(s_i^0+\tfrac{b_i}{2}+\cdot,\cdot)$ to $\Cyl_{\frac{b_i}{2}}$ converge to a full bubble branch in the sense of Definition \ref{full_bub_branch_def}.
\end{defn}

\begin{thm}
 \label{thm:torus}
 Let $(u_i,g_i)$ be a sequence of almost-minimal maps from 
 $M=T^2$ to $N$ 
 for which $\ell(g_i)\to 0$. 
 Then, after passing to a subsequence,  we either obtain that the maps $u_i$ converge to a nontrivial full bubble branch with no loss of energy or we have that 
$E(u_i,g_i)\to 0$ and that the whole torus is mapped near an i-dependent curve in the sense that 
$\limsup_{i\to\infty}\sup_{s\in [0,b_i]}\osc(u_i;\{s\}\times S^1)=0.$
 In particular, we always have that for this subsequence 
\beq
\label{torus_no_loss_of_energy}
\lim_{i\to\infty} E(u_i,g_i)=\sum_j E(\Om_j),
\eeq
where $\{\Om_j\}$ is the collection of all bubbles. 
\end{thm}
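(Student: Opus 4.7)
The strategy is to reduce the statement to Theorem \ref{new-thm} by lifting to the universal cover cylinder. Keeping the notation $u_i$ for the lift, the torus metric $g_i$ corresponds to $(2\pi b_i)^{-1}g_0$ on $\Cyl(0,b_i)$, and because this conformal factor is constant the rescaling law for the tension field gives
\[
\|\tau_{g_0}(u_i)\|_{L^2(\Cyl(0,b_i))}=(2\pi b_i)^{-1/2}\|\tau_{g_i}(u_i)\|_{L^2(T^2,g_i)}\longrightarrow 0,
\]
so the almost-harmonic hypothesis \eqref{ass:almost-harmonic} holds on every finite sub-cylinder. To have the $L^1$-Hopf hypothesis \eqref{ass:Hopf-0} at my disposal as well, I would use the Poincar\'e-type estimate for quadratic differentials on degenerating tori from \cite{RT2}, combined with the standard pointwise bound controlling $\dbar\Phi(u_i,g_i)$ in terms of $\tau_{g_i}(u_i)$ and $\na u_i$ and with the almost-minimality $\|P_{g_i}\Phi(u_i,g_i)\|_{L^2}\to 0$; together these deliver $\|\Phi(u_i)\|_{L^1(\Cyl(0,b_i))}\to 0$, crucially exploiting $\ell(g_i)\to 0$. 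I expect this transfer of the Hopf-differential smallness to the cylindrical picture to be the main technical step of the argument.

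Having verified the hypotheses, apply Theorem \ref{new-thm} to $u_i$ restricted to the centred fundamental cylinder $\Cyl_{b_i/2}$, obtaining sequences $-b_i/2=s_i^0\ll\cdots\ll s_i^{\bar m}=b_i/2$. If some interior $s_i^m$ is present, designate one as the sequence $s_i^0$ of Definition \ref{def:fbb-torus}; part 1 of Theorem \ref{new-thm} then delivers the required nontrivial bubble branch at that point. For the second requirement of Definition \ref{def:fbb-torus} I translate by $b_i/2$ on the lift and apply Theorem \ref{new-thm} once more to $\Cyl_{b_i/2}$ centred at the antipodal point $s_i^0+b_i/2$; this produces a full bubble branch in the sense of Definition \ref{full_bub_branch_def}, which moreover contains all remaining bubble points of the first application as interior bubbling sequences.

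In the opposite case $\bar m=1$, I apply Theorem \ref{new-thm} a second time to $\Cyl_{b_i/2}$ centred at $s=b_i/4$. If this application too produces no interior bubbles, then the two ``controlled'' interiors (on which parts 2 and 3 apply) together cover the whole torus for any fixed $\lambda$ and sufficiently large $i$, and their conclusions combine to give $\limsup_i\sup_{s\in[0,b_i]}\osc(u_i;\{s\}\times S^1)=0$ together with $E(u_i,g_i)\to 0$, placing us in the trivial alternative. If instead the second application does find a bubble, we reduce to the nontrivial case as above. In either case the energy identity \eqref{torus_no_loss_of_energy} follows by summation: every $\Omega_j$ arises at some $s_i^m$ via Theorem \ref{existing_theory} and Definition \ref{bb_def}, and part 3 of Theorem \ref{new-thm} ensures that the energy contribution of each connecting cylinder vanishes in the limit, with both sides of \eqref{torus_no_loss_of_energy} trivially zero in the trivial alternative.
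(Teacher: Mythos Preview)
Your proposal has a genuine gap at exactly the step you flag as ``the main technical step'': the Poincar\'e-type estimate for quadratic differentials from \cite{RT2} does \emph{not} hold with a uniform constant on degenerating tori. The paper says this explicitly in its proof of the theorem, and Remark~\ref{rem:Ding-Li-Liu} emphasises that in the genus-one case the $L^1$-smallness of the Hopf differential is only obtained \emph{a posteriori}, as a consequence of no-loss-of-energy rather than as an input to it. Your reduction to part~3 of Theorem~\ref{new-thm} therefore does not go through, because hypothesis \eqref{ass:Hopf-0} is unavailable.

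The paper's substitute is as follows. Since every holomorphic quadratic differential on $T^2$ is a constant multiple of $dz^2$, the almost-minimality condition $\|P_{g_i}(\Phi)\|_{L^2}\to 0$ yields directly that
\[
\int_{\Cyl(0,b_i)}\bigl(|\partial_s u_i|^2-|\partial_\theta u_i|^2\bigr)\,d\theta\,ds\longrightarrow 0,
\]
which is much weaker than $\|\Phi\|_{L^1}\to 0$. To exploit this, the paper proves and invokes Lemma~\ref{cor:new}, which (using Remark~\ref{rem:conv-Hopf} that the Hopf differential of a bubble branch vanishes locally) expresses the energy loss on each connecting cylinder as $\tfrac12\int(|\partial_s u_i|^2-|\partial_\theta u_i|^2)$ over that cylinder, \emph{without} absolute values. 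Summing over all connecting cylinders and using periodicity, the total loss equals the integral over a full period, which tends to zero by the display above. This replaces your appeal to part~3 of Theorem~\ref{new-thm}. The rest of your outline (two applications of Theorem~\ref{new-thm}, the dichotomy according to whether any interior $s_i^m$ appears, the oscillation bound in the trivial case) matches the paper's structure.
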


As usual, this theorem can be applied to the flow \eqref{flow}, thanks to Proposition \ref{get_ti}.
For a discussion of existing claims concerning no-loss-of-energy for $M=T^2$ 
we refer to Remark \ref{rem:Ding-Li-Liu}.

Both Theorems \ref{mainthm} and \ref{thm:torus} indirectly describe the map $u_i$ on `connecting cylinders' as being close to an $i$-dependent curve, thanks to \eqref{eq:collars-curves_new}.
We are not claiming that this curve has zero length in the limit, as is the case in some similar situations, e.g. for necks in harmonic maps \cite{parker} and the harmonic map flow  from fixed domains \cite{QingTian}. 
We are also not claiming that in some limit the curve should satisfy an equation, for example that it might always be a geodesic as would be the case 
for sequences of harmonic maps from degenerating surfaces, see \cite{Chen-Tian}. 
The following construction can be used to show that these claims would be false in general.

\begin{prop}
\label{nongeod}
Given a closed Riemannian manifold $(N,G)$, a $C^2$ unit-speed curve $\al:[-L/2,L/2]\to N$, and any sequence of degenerating hyperbolic collars $\Cyl_{X_i}$, $X_i\to \infty$, equipped with  their  collar metrics $g_i$ as in the Collar Lemma \ref{lemma:collar},
the maps $u_i:\Cyl_{X_i}\to N$ defined by
$$u_i(s,\th)=\al\left(\frac{Ls}{2X_i}\right)$$
satisfy
$$E(u_i;\Cyl_{X_i}) \leq \frac{CL^2}{X_i}\to 0,$$
$$\|\tau_{g_i}(u_i)\|_{L^2(\Cyl_{X_i},g_i)}^2
\leq C\frac{L^4}{X_i}\to 0$$
and 
$$\|\Phi(u_i,g_i)\|_{L^2(\Cyl_{X_i},g_i)}^2
\leq C\frac{L^4}{X_i}
\to 0$$
\end{prop}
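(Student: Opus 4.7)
The plan is to reduce all three estimates to explicit one-variable computations by exploiting two features of the setup: (i) $u_i$ factors as $\al$ composed with the linear rescaling $(s,\theta)\mapsto Ls/(2X_i)$, so all derivatives of $u_i$ are pointwise of order $L/X_i$; and (ii) the collar metric is conformally flat, $g_i=\rho_i^2(ds^2+d\theta^2)$, with the explicit collar density $\rho_i$ provided by Lemma \ref{lemma:collar}. Before anything else I would record the elementary identities $\partial_s u_i=(L/(2X_i))\,\al'(Ls/(2X_i))$ and $\partial_\theta u_i=0$, together with $|\partial_s u_i|_G=L/(2X_i)$ from the unit-speed parametrisation of $\al$.

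The energy bound is then immediate from the 2D conformal invariance of the Dirichlet energy,
\[
E(u_i,g_i)=E(u_i,g_0)=\tfrac{1}{2}\int_{\Cyl_{X_i}}\bigl(L/(2X_i)\bigr)^{2}\,ds\,d\theta=\tfrac{\pi L^2}{4X_i}.
\]
For the tension field I would use the 2D conformal rescaling rule $\tau_{g_i}(u_i)=\rho_i^{-2}\,\tau_{g_0}(u_i)$, together with the pulled-back identity $\tau_{g_0}(u_i)=(L/(2X_i))^{2}\,(\nabla_{\al'}\al')\circ(Ls/(2X_i))$, whose pointwise $G$-norm is bounded by $C\|\al\|_{C^{2}}(L/X_i)^{2}$. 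For the Hopf differential, writing $z=s+i\theta$ and using that $u_i$ has no $\theta$-dependence yields $\Phi(u_i,g_i)=(L/(2X_i))^{2}\,dz^{2}$ directly. Since $\|\tau_{g_i}(u_i)\|_{L^2(g_i)}^{2}=\int|\tau_{g_i}(u_i)|_{G}^{2}\,\rho_i^{2}\,ds\,d\theta$ and $\|\phi\,dz^{2}\|_{L^2(g_i)}^{2}=\int|\phi|^{2}\rho_i^{-2}\,ds\,d\theta$, both of the remaining estimates reduce to a bound of the form
\[
C\|\al\|_{C^{2}}^{2}\,(L/X_i)^{4}\int_{\Cyl_{X_i}}\rho_i^{-2}\,ds\,d\theta.
\]

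It therefore remains to control $\int_{\Cyl_{X_i}}\rho_i^{-2}\,ds\,d\theta$. Using the explicit form of the collar density from Lemma \ref{lemma:collar}, in which $\rho_i$ attains its minimum of order $\ell_i$ at $s=0$ and the collar width satisfies $X_i=X(\ell_i)\sim\pi^{2}/\ell_i$ as $\ell_i\to 0$, a direct substitution of the form $u=\ell_i s/(2\pi)$ shows $\int_{\Cyl_{X_i}}\rho_i^{-2}\,ds\,d\theta\leq C\ell_i^{-3}\leq CX_i^{3}$. Inserting this into the previous displayed inequality yields the claimed $CL^{4}/X_i$ bounds in the two remaining cases. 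I expect no real analytical obstacle here; the main task is simply keeping track of the conformal factors and of the precise collar asymptotics, after which the computation is purely mechanical.
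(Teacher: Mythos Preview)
Your proposal is correct and follows essentially the same route as the paper: conformal invariance for the energy, the conformal scaling $\tau_{g_i}=\rho_i^{-2}\tau_{g_0}$ for the tension, and the explicit form $\Phi=(L/(2X_i))^{2}dz^{2}$ for the Hopf differential, all of which reduce to bounding $\int_{\Cyl_{X_i}}\rho_i^{-2}\,ds\,d\theta$ by $CX_i^{3}$. The only cosmetic differences are that the paper bounds $\rho_i^{-2}\leq C\ell_i^{-2}$ pointwise (rather than your substitution) to get $\int_{-X_i}^{X_i}\rho_i^{-2}\,ds\leq CX_i\ell_i^{-2}\leq CX_i^{3}$, and works extrinsically with an isometric embedding of $N$ rather than with $\nabla_{\al'}\al'$; your $\|\phi\,dz^{2}\|_{L^2(g_i)}^{2}$ is also off by a harmless factor of $4$.
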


We give the computations in Section \ref{neck_example_sect}.
The proposition can be used to construct a sequence of almost-minimal maps with nontrivial connecting curves: For example, one can take any curve $\al$ as above, and any sequence of hyperbolic metrics $g_i$ with a separating collar degenerating,
and then take the maps $u_i$ to be essentially constant on either side of this one degenerating collar where the map is modelled on that constructed in the proposition.
A slight variation of the construction would show that the $i$-dependent curve need not have a
reasonable limit as $i\to\infty$ in general, whichever subsequence we take, and indeed that its length can converge to infinity as $i\to\infty$.

Finally, we consider the more specific question of what the connecting curves can look like in the case that we are considering the flow \eqref{flow} and 
we have applied Proposition \ref{get_ti} to get a
sequence of almost-minimal maps $u(t_i):(M,g(t_i))\to N$.
One consequence of Theorem \ref{new-thm} is that 
for large $\la$ and very large $i$, the restriction of $u(t_i)$ to the connecting cylinders 
$\Cyl(s_i^{m-1}+\la,s_i^m-\la)$ 
is close in $C^0$ to an $i$-dependent curve $\ga(s)$
connecting the end points
\beq
\label{pplus}
p^{m-1}_+:=\lim_{\la\to\infty}\lim_{i\to\infty}u(t_i)(s^{m-1}_i+\la,\th=0)
\eeq
and 
\beq
\label{pminus}
p^{m}_-:=\lim_{\la\to\infty}\lim_{i\to\infty}u(t_i)(s^{m}_i-\la,\th=0)
\eeq
in the images of branched minimal immersions that we have already found.
(Note that it is not important to take $\th=0$ in these limits. Any sequence $\th_i$ would give the same limits.) 

Now that we have restricted to the particular case in which our theory
is applied to the Teichm\"uller harmonic map flow, 
one might hope to rule out or restrict necks from developing.
However, these necks do exist, and we do not have to have $p^{m-1}_+= p^m_-$,
as we now explain. 

\begin{thm}
\label{neck_thm}
On any oriented closed surface $M$ of genus at least two,
there exists a smooth solution of the Teichm\"uller harmonic map flow 
that develops a nontrivial neck as $t\to\infty$. More precisely, if we extract a sequence of almost-minimal maps 
$(u(t_i),g(t_i))$ as in Proposition \ref{get_ti}, then we can analyse it with Theorem \ref{mainthm}, and after passing to a further subsequence we obtain
\beq
\label{neck_develops}
\lim_{\la\to\infty}\liminf_{i\to\infty}
\osc(u(t_i);\Cyl(s_i^{m-1}+\la,s_i^m-\la))>0
\eeq
for some degenerating collar and some $m\in\{1,\ldots,\bar m\}$.
Moreover, there exist examples for which 
\beq
\label{ps_unequal}
p^{m-1}_+\neq p^m_-,
\eeq
i.e. at least one neck connects distinct points.
\end{thm}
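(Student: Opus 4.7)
The plan is to construct, for each genus $\ga\geq 2$, an initial datum $(u_0,g_0)$ whose smooth Teichm\"uller harmonic map flow $(u(t),g(t))$ exists for all $t\geq 0$, along which a prescribed separating simple closed geodesic of $g(t)$ pinches as $t\to\infty$, and for which the limit across the degenerating collar joins two distinct points in $N$. Since \eqref{ps_unequal} obviously implies \eqref{neck_develops}, it is enough to realise \eqref{ps_unequal}.

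\emph{Choice of target and initial data.} Fix a separating simple closed curve $\si\subset M$ dividing $M$ into two subsurfaces $M^{(1)},M^{(2)}$ of strictly smaller genus. Choose a closed Riemannian manifold $(N,G)$ of strictly negative sectional curvature containing two disjoint branched minimal immersions $f_k:\hat M^{(k)}\to N$, $k=1,2$ (for instance $N$ can be built as a product of closed hyperbolic surfaces of suitable genus, inside which one finds many disjoint pairs of totally geodesic subsurfaces of the right topological type), together with a smooth curve $\al:[-L/2,L/2]\to N$ of length $L>0$ joining a point $p^+\in f_1(\hat M^{(1)})$ to a distinct point $p^-\in f_2(\hat M^{(2)})$. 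Equip $M$ with a hyperbolic metric $g_0$ in which $\si$ is a closed geodesic of short length $\ell_0>0$, with isometric collar $\Cyl_{X_0}\subset M$, $X_0:=X(\ell_0)$. Define $u_0$ by patching: on each $M^{(k)}$ outside the collar let $u_0$ agree with a suitable pull-back of $f_k$; on the collar let $u_0$ coincide with the $\th$-independent map $(s,\th)\mapsto \al(Ls/(2X_0))$ of Proposition \ref{nongeod}, smoothly interpolated so that $u_0(\pm X_0,\cdot)=p^\pm$. By Proposition \ref{nongeod}, the collar contributes $O(X_0^{-1})$ to $E(u_0)$, so by taking $\ell_0$ small, $E(u_0)$ is arbitrarily close to $E(f_1)+E(f_2)$.

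\emph{Global existence and degeneration.} The negative curvature of $N$ gives smooth global existence of $(u(t),g(t))$ via \cite{RTnonpositive}. To force the collar around $\si$ to pinch, we arrange that any smooth branched minimal immersion $M\to N$ homotopic to $u_0$ has energy strictly greater than $E(f_1)+E(f_2)$: the $f_k$ are the unique energy minimisers in their respective homotopy classes on $\hat M^{(k)}$ (by nonpositive curvature of $N$), but no single smooth branched minimal immersion of $M$ can realise the disjoint union $f_1\sqcup f_2$ exactly, so the infimum in the class $[u_0]$ carries strictly positive extra cost from the joining geometry. Under this choice, $\liminf_{t\to\infty}\ell(g(t))>0$ would contradict the convergence result of \cite{RT} in the non-degenerate case, which would produce a branched minimal immersion of $M$ of exactly the ruled-out energy. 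Hence $\liminf_{t\to\infty}\ell(g(t))=0$ along the family of separating geodesics representing $\si$, and Proposition \ref{get_ti} combined with Theorem \ref{mainthm} applies to this degenerating collar.

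\emph{Neck retention and the main obstacle.} It remains to verify \eqref{ps_unequal} along a suitable subsequence $t_i\to\infty$. The energy identity \eqref{energy-identity} yields $\pt u\in L^2(M\times[0,\infty))$, so for well chosen $t_i$ the maps $u(t_i)$ and $u_0$ are uniformly close in integrated $L^2$ outside of small space-time sets; moreover on each nondegenerating subsurface $M^{(k)}$ the map converges strongly (up to bubbling) to a branched minimal immersion in the class $[f_k]$, which by uniqueness is $f_k$ itself. Therefore the boundary values of $u(t_i)$ at the two ends of the collar remain in small neighbourhoods of $p^\pm$, and the endpoint limits $p^{m-1}_+$ and $p^m_-$ defined by \eqref{pplus}, \eqref{pminus} for the connecting cylinder corresponding to the pinching collar must then also lie in these small neighbourhoods of $p^+$ and $p^-$ respectively, hence in the disjoint sets $f_1(\hat M^{(1)})$ and $f_2(\hat M^{(2)})$; this gives \eqref{ps_unequal}. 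The main obstacle throughout is precisely this stability step: the flow only supplies integral-in-time information on $\pt u$ and $\pt g$, and one must carefully pick the subsequence $t_i$ to exploit the time integrability of $\|\pt u\|_{L^2}^2$ together with the shrinking cross-sectional geometry of the collar, so as to upgrade integrated flow estimates into the pointwise-in-$N$ statement that $p^{m-1}_+\neq p^m_-$.
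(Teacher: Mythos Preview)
Your overall strategy is reasonable in spirit, but several steps are genuine gaps rather than routine details, and the paper's proof takes a very different and much cleaner route.

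First, your target construction is internally inconsistent. You ask for $(N,G)$ of \emph{strictly} negative sectional curvature (presumably to invoke Hartman-type uniqueness of harmonic maps later), but your proposed example, a product of closed hyperbolic surfaces, has many $2$-planes of zero curvature. More importantly, you never actually establish the key ``energy gap'' claim that every branched minimal immersion $M\to N$ homotopic to $u_0$ has energy strictly exceeding $E(f_1)+E(f_2)$; the sentence ``no single smooth branched minimal immersion of $M$ can realise the disjoint union $f_1\sqcup f_2$ exactly, so the infimum \ldots carries strictly positive extra cost'' is an assertion, not an argument. Without it you cannot conclude that $\liminf_{t\to\infty}\ell(g(t))=0$, let alone that the degeneration occurs along the particular curve $\si$ you chose.

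Second, and as you yourself flag, the ``neck retention'' step is not proved. Even granting convergence on the thick parts to $f_1,f_2$, Theorem~\ref{mainthm} allows bubble branches to form inside the degenerating collar, so $\bar m$ may exceed $1$ and the points $p^{m-1}_+,p^m_-$ of \eqref{pplus}--\eqref{pminus} may lie on bubbles rather than on $f_1(\hat M^{(1)}),f_2(\hat M^{(2)})$. The $L^2$-in-time control of $\pt u$ from \eqref{energy-identity} does not by itself give the pointwise-in-$N$ conclusion you need.

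By contrast, the paper avoids all of this by taking $N=S^1$. Then there are \emph{no} nonconstant branched minimal immersions whatsoever, so every limit piece (and every bubble) is constant; choosing $u_0$ homotopically nontrivial forces both collar degeneration and a nontrivial neck purely for topological reasons. For \eqref{ps_unequal}, rather than a stability argument, the paper lifts the entire flow to a double cover of the domain and then to a double cover of the target $S^1$ (Lemma~\ref{lem:lifting} checks that the lifted pair still solves \eqref{flow}); a simple $\pi_1$ argument then shows that the constant limit pieces in the lifted picture hit \emph{both} preimages of the single limit point $p$, so some neck must connect distinct points.
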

The simplest way of constructing an example as required in the theorem is to arrange that
there can be no nonconstant branched minimal immersions in the limit, while preventing the flow
from being homotopic to a constant map. The flow then forces a collar to degenerate in the limit $t\to\infty$, and maps it to a curve in the target as we describe in Theorem \ref{mainthm}.
The precise construction will be given in Section \ref{neck_example_sect}.
A key ingredient is the regularity theory for flows into nonpositively curved targets developed in \cite{RTnonpositive}.

\begin{rmk}
It would be interesting to prove that in a large class of situations the connecting curves of Theorem \ref{mainthm} and Theorem \ref{thm:torus}, when applied to the Teichm\"uller harmonic map flow, will have a limit, 
and that that limit must be a geodesic.
In the case that $M=T^2$, and under the assumption that the total energy converges to zero as $t\to\infty$, 
Ding-Li-Liu \cite{Ding-Li-Liu} proved that the image of the torus indeed converges to a closed geodesic. 
\end{rmk}

The conclusion of the theory outlined above is a much more refined description of how the flow decomposes an arbitrary map into a collection of branched minimal immersions from lower genus surfaces. 
%

As in \cite{RTnonpositive},
although we state our flow results for compact target manifolds, the proofs extend to some noncompact situations, for 
example when $N$ is noncompact but the image of $u(0)$ lies within the sublevel set of a proper convex function on $N$.

The paper is organised as follows: In the next section we derive bounds on the angular part of the energy of almost harmonic maps on long euclidean cylinders. The main results about almost-minimal maps are then established in Section \ref{sect3}, where we first prove
Theorem \ref{noloss_statement_new}, which then allows us to show Theorem \ref{mainthm}, and as a consequence to complete the proof of Theorem \ref{energy_id_thm}, and to prove Theorem \ref{thm:torus}. In Section \ref{neck_example_sect} we prove the results on the images of the connecting cylinders stated in Proposition \ref{nongeod} and Theorem \ref{neck_thm}. 
In the appendix we finally include the statements of two well-known results for hyperbolic surfaces, the Collar lemma and the Deligne-Mumford compactness theorem, the statements and notations of which are used throughout the paper.

\emph{Acknowledgements:} 
The third author was supported by 
EPSRC grant number EP/K00865X/1.
We thank the referee for detailed commentary and suggestions.

\section{Angular energy decay along cylinders for almost-harmonic maps}

Throughout this section we consider smooth maps $u: \Cyl_\Lambda \to N\hookrightarrow \mathbb{R}^{N_0}$, 
$\Lambda >0$, where $N = (N,G)$ is a compact Riemannian manifold that we isometrically embed in $\mathbb{R}^{N_0}$. 
The tension $\tau$ of $u$ with respect to the 
flat metric $(ds^2+d \theta^2)$ on the cylinder is given by 
	\begin{equation*}
	\tau := u_{\theta \theta} + u_{ss} + A(u) ( u_s, u_s) + A(u) (u_\theta, u_\theta) ,
	\end{equation*}
	where $A(u)$ denotes the second fundamental form of the target $N \hookrightarrow \mathbb{R}^{N_0}$.

Our goal is to prove a decay result for almost-harmonic maps from cylinders, forcing the 
angular energy to be very small on the middle of the cylinder $\Cyl_\Lambda$ when we apply it in the setting of Theorem \ref{mainthm}.
This will be done by first controlling the angular energy, defined in terms of the angular energy on circles
$$\vartheta(s) =\vartheta(u,s):= \int_{\{s\} \times S^1} |u_\theta|^2.$$
The proof of the following lemma is very similar to \cite[Lemma 2.13]{toppingAnnals}, which in turn optimised \cite{LinWang}. Energy decay in such situations arose earlier in \cite{QingTian}, and such results for harmonic functions are classical. More sophisticated decay results were required in \cite{RTnonpositive}.

\begin{lemma}
\label{lem:angularenergy}
For $u:\Cyl_\Lambda\to N$  smooth %
such that $E(u; \Cyl_\Lambda)\leq E_0$,
there exist $\delta > 0$ and $C\in (0,\infty)$ depending only on $N$ and $E_0$, such that if
\begin{equation*}
E(u; \Cyl(s-1,s+2)) < \delta \hspace{0.5em} \text{for every } s
\text{ such that } \hspace{0.5em} \Cyl(s-1,s+2) \subset \Cyl_\Lambda 
\end{equation*}
and
\begin{equation*}
\norm{ \tau}^2_{L^2(\Cyl_\Lambda)} < \delta,
\end{equation*}
then for any $s \in (-\Lambda +1, \Lambda -1)$ we have
\begin{equation}
	\vartheta(s) \leq C e^{|s|-\Lambda} + \int_{-\Lambda}^{\Lambda} e^{-|s-q|} \Tau(q) dq 
		\label{eq:angularestimate}
	\end{equation}
	where 
	$$\Tau(s) :=  \int_{\{s\}\times S^1} |\tau|^2 .$$
Furthermore when $1 < \lambda < \Lambda$, we have the angular energy estimate
\begin{equation}
\label{claim1}
\int_{-\Lambda + \lambda}^{\Lambda - \lambda} \vartheta(s) ds \leq 
C  e^{-\lambda}  + 2\norm{\tau}_{L^2(\Cyl_\Lambda)}^2,
\end{equation}
and thus
\begin{equation}
\label{claim2}
E(u;\Cyl_{\Lambda -\lambda}) \leq	C  e^{-\lambda}  + 2\norm{\tau}_{L^2(\Cyl_\Lambda)}^2 + \frac14\norm{\Phi}_{L^1(\Cyl_{\Lambda -\lambda})}.
\end{equation} 

\end{lemma}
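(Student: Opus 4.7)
\emph{Step 1 ($\epsilon$-regularity).} My first step is to turn the small-energy hypothesis on every unit sub-cylinder, together with $\|\tau\|_{L^2}^2<\delta$, into a pointwise gradient bound. This is the standard Bochner-based $\epsilon$-regularity for almost-harmonic maps from surfaces (cf.\ \cite{struweCMH}), and it gives $|du|^2(s,\theta)\leq C\delta$ on the slightly shrunk cylinder, say for $s\in(-\Lambda+\tfrac12,\Lambda-\tfrac12)$, with $C$ depending only on $(N,G)$ and $E_0$. In particular $\vartheta(s)\leq C\delta$ there, and since $N$ is compact $|A(u)(X,Y)|\leq C|X|\,|Y|$, so every expression of the form $A(u_s,u_s)$, $A(u_\theta,u_\theta)$, etc., is pointwise $O(|du|^2)=O(\delta)$.

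\emph{Step 2 (differential inequality).} Next I would derive a pointwise differential inequality for $\vartheta$. Differentiating $\vartheta(s)=\int_{S^1}|u_\theta|^2\,d\theta$ twice and integrating by parts in the periodic variable $\theta$ gives
\[
\vartheta''(s)=2\int_{S^1}|u_{s\theta}|^2\,d\theta-2\int_{S^1}\langle u_{ss},u_{\theta\theta}\rangle\,d\theta.
\]
Substituting $u_{ss}=\tau-u_{\theta\theta}-A(u)(u_s,u_s)-A(u)(u_\theta,u_\theta)$ from the definition of $\tau$, and decomposing $u_{\theta\theta}$ into tangential and normal parts (the normal part being $-A(u)(u_\theta,u_\theta)$ in the paper's sign convention), only normal-normal inner products among the $A$-contributions survive; these are pointwise $O(|du|^4)$ and hence integrate to $\leq C\delta\,\vartheta(s)$ by Step 1. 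The $\tau$-term is controlled by Young's inequality, $-2\int\langle\tau,u_{\theta\theta}\rangle\geq-\int|u_{\theta\theta}|^2-\Tau(s)$. Periodicity forces $\theta\mapsto u_\theta(s,\theta)$ to have zero mean on $S^1$, so the Poincar\'e inequality on the circle gives $\vartheta(s)\leq\int_{S^1}|u_{\theta\theta}|^2\,d\theta$. Absorbing the $C\delta\,\vartheta$ error by taking $\delta$ sufficiently small produces, for $s$ in the interior,
\[
\vartheta''(s)\geq\vartheta(s)-\Tau(s).
\]

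\emph{Step 3 (comparison and integrated bounds).} With this in hand I introduce $\phi(s):=\tfrac12\int_{-\Lambda}^{\Lambda}e^{-|s-q|}\Tau(q)\,dq$, which solves $\phi''-\phi=-\Tau$ on $(-\Lambda,\Lambda)$, so $w:=\vartheta-\phi$ obeys $w''\geq w$. By Step 1, $w(\pm(\Lambda-1))\leq C$, and comparison with the supersolution $W(s):=C\cosh(s)/\cosh(\Lambda-1)$ of $W''=W$, via the maximum principle applied on $[-\Lambda+1,\Lambda-1]$, yields $w(s)\leq W(s)\leq Ce^{|s|-\Lambda}$, which is \eqref{eq:angularestimate}. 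Integrating \eqref{eq:angularestimate} over $s\in(-\Lambda+\lambda,\Lambda-\lambda)$, using $\int e^{|s|-\Lambda}\,ds\leq Ce^{-\lambda}$ together with Fubini and the observation $\int_{\mathbb{R}}e^{-|s-q|}\,ds=2$ for the second term, delivers \eqref{claim1}. For \eqref{claim2} I combine \eqref{claim1} with the pointwise inequality $\big||u_s|^2-|u_\theta|^2\big|\leq|\Phi|$, immediate from the definition of the Hopf differential, which gives $e(u)\leq|u_\theta|^2+\tfrac14|\Phi|$ in the paper's normalisation and hence \eqref{claim2} after integration.

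The main subtlety is Step 2: the sign convention for $A$ has to be tracked carefully so that the normal-part contributions land with the favourable sign, and $\delta$ must be chosen small enough to absorb the $C\delta\,\vartheta(s)$ defect so that the coefficient $1$ in front of $\vartheta$ (which governs the sharp exponential decay rate in \eqref{eq:angularestimate}) is preserved.
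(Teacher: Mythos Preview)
Your outline matches the paper's: establish the differential inequality $\vartheta''\geq\vartheta-c\,\Tau$ on the interior (the paper's Lemma~\ref{lem:diffineq}), then run a maximum-principle comparison against an explicit supersolution (the paper's Lemma~\ref{lem:ode}); Steps~3 and the derivation of \eqref{claim1}--\eqref{claim2} are essentially identical to the paper's. Your normal/tangential splitting of $u_{\theta\theta}$ to treat the $A$-terms is a clean alternative to the paper's integration-by-parts plus Young's inequality.

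There is, however, a genuine gap in Step~1. With only $\|\tau\|_{L^2}^2<\delta$, the pointwise bound $|du|^2\leq C\delta$ is not available: small energy plus $L^2$ tension yields $u-\bar u\in W^{2,2}$ with small norm (this is exactly Lemma~\ref{lem:2}), but in two dimensions $W^{2,2}$ fails to embed into $W^{1,\infty}$, and one can produce almost-harmonic maps with $\tau\in L^2$ whose gradient is unbounded. Your Step~2 uses this pointwise bound to estimate $\int_{\{s\}\times S^1}|u_\theta|^2(|u_s|^2+|u_\theta|^2)\leq C\delta\,\vartheta(s)$, so as written it does not go through.

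The paper's substitute is twofold: (i) the $W^{2,2}$ estimate plus a Sobolev trace gives the \emph{slice} bound $\int_{\{s\}\times S^1}(|u_s|^2+|u_\theta|^2)\leq C\delta$ (Corollary~\ref{cor:1}); (ii) since $u_\theta$ has zero mean on each circle, the one-dimensional Sobolev inequality gives $\sup_\theta|u_\theta|^2\leq C\int_{\{s\}\times S^1}|u_{\theta\theta}|^2$. Combining (i) and (ii) bounds the $A$-error by $C\delta\int|u_{\theta\theta}|^2$, which is absorbed into the main $2\int|u_{\theta\theta}|^2$ term \emph{without} touching the coefficient on $\vartheta$. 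Note also that even granting your Step~1, absorbing a $C\delta\,\vartheta$ defect leaves $\vartheta''\geq(1-C\delta)\vartheta-\Tau$, not $\vartheta''\geq\vartheta-\Tau$; so the exact rate $e^{|s|-\Lambda}$ in \eqref{eq:angularestimate} and the constant $2$ in \eqref{claim1} would not follow (though any positive rate suffices for the downstream applications). Both issues are repaired by switching to the paper's slice-based estimate for the $A$-terms.
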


We require a standard `small-energy' estimate, very similar to e.g. \cite[Lemma 2.1]{DT} or \cite[Lemma 2.9]{toppingAnnals}.

\begin{lemma}
	There exist constants $\delta_0 \in (0,1]$ and $C \in (0,\infty)$  depending only on $N$ such that any map $u \in W^{2,2}(\Cyl(-1,2),N))$ which satisfies $E(u; \Cyl(-1,2)) < \delta_0$ must obey the inequality 
	$$ \norm{ u - \bar{u}}_{W^{2,2}(\Cyl(0,1))} \leq C \left( \norm{\nabla u}_{L^2(\Cyl(-1,2))} + \|\tau\|_{L^2(\Cyl(-1,2))}  \right) $$
	where $\bar{u}$ is the average value of u over $\Cyl(-1,2)$.
\label{lem:2}
\end{lemma}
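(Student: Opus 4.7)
This is a standard small-energy regularity estimate for almost-harmonic maps on a two-dimensional domain. Since $u$ takes values in $N\hookrightarrow\mathbb{R}^{N_0}$, the tension field equation in extrinsic form reads
$$\Delta u = \tau - A(u)(\nabla u,\nabla u),$$
with $\Delta=\partial_s^2+\partial_\theta^2$ and $|A(u)|\leq C_N$. The plan is to apply linear $L^2$ elliptic theory to this equation after cutting off, and then to absorb the critically scaling quadratic nonlinearity into the left-hand side by combining the small-energy hypothesis with the Gagliardo--Nirenberg inequality.

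\textbf{Step 1 (cutoff and linear $L^2$ estimate).} Pick a smooth cutoff $\eta$ which is $1$ on $\Cyl(0,1)$, supported in an intermediate cylinder $K\subset\Cyl(-1,2)$, and with bounded derivatives. Setting $v=\eta(u-\bar u)$ produces a function compactly supported in $\Cyl(-1,2)$ satisfying
$$\Delta v = \eta\bigl(\tau - A(u)(\nabla u,\nabla u)\bigr) + 2\nabla\eta\cdot\nabla u + \Delta\eta\,(u-\bar u).$$
Applying standard $L^2$ elliptic theory for the Laplacian to $v$ (which is compactly supported on a flat cylinder, so that $\|v\|_{W^{2,2}}\leq C\|\Delta v\|_{L^2}$), and using Poincar\'e on $\Cyl(-1,2)$ to bound $\|u-\bar u\|_{L^2}$, gives
$$\|u-\bar u\|_{W^{2,2}(\Cyl(0,1))} \leq C\bigl(\|\tau\|_{L^2} + \|\,|\nabla u|^2\,\|_{L^2(K)} + \|\nabla u\|_{L^2(\Cyl(-1,2))}\bigr).$$

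\textbf{Step 2 (Gagliardo--Nirenberg and absorption).} The two-dimensional Gagliardo--Nirenberg inequality applied to $\nabla u$, together with a further cutoff between $K$ and a slightly larger cylinder $K'\subset\Cyl(-1,2)$, gives
$$\|\nabla u\|_{L^4(K)}^2 \leq C\bigl(\|\nabla u\|_{L^2(K')}\|\nabla^2 u\|_{L^2(K')} + \|\nabla u\|_{L^2(K')}^2\bigr),$$
so that $\|\,|\nabla u|^2\,\|_{L^2(K)} \leq C\|\nabla u\|_{L^2}\|\nabla^2 u\|_{L^2(K')} + C\|\nabla u\|_{L^2}^2$. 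The small-energy hypothesis gives $\|\nabla u\|_{L^2(\Cyl(-1,2))}\leq \sqrt{2\delta_0}$, which can be made arbitrarily small by choosing $\delta_0$ small. Iterating Step 1 on a nested chain of cylinders $\Cyl(0,1)\subset K\subset K'\subset\cdots\subset\Cyl(-1,2)$, the successive $\|\nabla^2 u\|_{L^2}$ terms produced by Gagliardo--Nirenberg on the right-hand side can be absorbed into the left, leaving
$$\|u-\bar u\|_{W^{2,2}(\Cyl(0,1))} \leq C\bigl(\|\tau\|_{L^2(\Cyl(-1,2))} + \|\nabla u\|_{L^2(\Cyl(-1,2))}\bigr),$$
which is the asserted inequality. (The harmless extra term $\|\nabla u\|_{L^2}^2$ is controlled by $\sqrt{\delta_0}\|\nabla u\|_{L^2}$.)

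\textbf{Main obstacle.} The whole argument hinges on the absorption step: in two dimensions the quadratic term $|\nabla u|^2$ sits exactly at the borderline of $L^2$ theory, since $W^{1,2}$ just fails to embed into $L^\infty$, and so a direct $L^2$ estimate for $\Delta u$ cannot close without smallness. Only the assumption $E(u;\Cyl(-1,2))<\delta_0$ with $\delta_0$ small enough permits the quadratic nonlinearity to be pushed back into the left-hand side. Everything else is bookkeeping with nested cutoffs to accommodate the fact that interior elliptic estimates live on slightly shrunken domains.
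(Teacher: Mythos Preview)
The paper does not actually prove this lemma: it introduces it as ``a standard `small-energy' estimate, very similar to e.g.\ \cite[Lemma 2.1]{DT} or \cite[Lemma 2.9]{toppingAnnals}'' and then simply states it, deferring to the literature. Your argument is correct and is precisely the standard one underlying those references: elliptic $L^2$ theory applied to the extrinsic equation $\Delta u=\tau-A(u)(\nabla u,\nabla u)$ after cutoff, with the borderline quadratic term controlled via the two-dimensional Ladyzhenskaya/Gagliardo--Nirenberg inequality and absorbed using the smallness of $\|\nabla u\|_{L^2}$.

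One small comment on the absorption step: your phrase ``iterating Step~1 on a nested chain of cylinders'' is doing real work, since each application of the interior estimate and of Gagliardo--Nirenberg loses a little room, so the $\|\nabla^2 u\|_{L^2}$ term on the right always lives on a strictly larger cylinder than the one on the left. Closing this requires either the standard Giaquinta-type iteration lemma (using that $u\in W^{2,2}$ a priori, so the quantity being iterated is finite) or, equivalently, arranging the cutoff so that the nonlinear term is estimated by $\|\nabla u\|_{L^2}\|v\|_{W^{2,2}}$ with $v$ the \emph{same} cutoff function, allowing direct absorption. Either route is routine, but it is worth being explicit that this is where the hypothesis $u\in W^{2,2}(\Cyl(-1,2),N)$ is used beyond merely making sense of the statement.
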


Applying the Sobolev Trace Theorem gives the following (cf. 
\cite{toppingAnnals}):
\begin{cor}
	\label{cor:1} 
	For any map $u \in C^\infty(\Cyl(-1,2),N)$ satisfying  $E(u;\Cyl(-1,2)) < \delta_0$ (where $\delta_0$ originates in Lemma \ref{lem:2}) and for any $s \in (0,1)$, there holds the estimate
	\begin{equation*}
	\int\limits_{\{s\} \times S^1}( |u_\theta|^2 + |u_s|^2 ) \leq C\left( \norm{\nabla u}_{L^2(\Cyl(-1,2))} + \|\tau\|_{L^2(\Cyl(-1,2))}  \right)^2
	\end{equation*}
	with some constant $C$, again only depending on $N$.
\end{cor}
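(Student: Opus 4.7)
The plan is a direct two-step deduction, with no real obstacle beyond careful bookkeeping. First, I would apply Lemma \ref{lem:2} on the cylinder $\Cyl(-1,2)$, whose hypothesis $E(u;\Cyl(-1,2))<\delta_0$ is exactly the one assumed in the corollary. This yields
\[
\|u-\bar{u}\|_{W^{2,2}(\Cyl(0,1))} \;\leq\; C\bigl(\|\nabla u\|_{L^2(\Cyl(-1,2))} + \|\tau\|_{L^2(\Cyl(-1,2))}\bigr),
\]
where $\bar{u}\in\R^{N_0}$ is the Euclidean average of $u$ over $\Cyl(-1,2)$ supplied by the lemma.

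Second, for any interior slice $\{s\}\times S^1$ with $s\in(0,1)$, the Sobolev trace theorem produces a continuous restriction operator $W^{1,2}(\Cyl(0,1))\to H^{1/2}(\{s\}\times S^1)\hookrightarrow L^2(\{s\}\times S^1)$, with operator norm bounded uniformly in $s$. I would apply this componentwise to $\nabla v$, where $v:=u-\bar u$ and hence $\nabla v\in W^{1,2}(\Cyl(0,1))$, to obtain
\[
\int_{\{s\}\times S^1}|\nabla v|^2 \;\leq\; C\,\|v\|_{W^{2,2}(\Cyl(0,1))}^2.
\]
Since $\bar{u}$ is a constant vector, $\nabla u=\nabla v$ pointwise, so the left-hand side coincides with $\int_{\{s\}\times S^1}(|u_\theta|^2+|u_s|^2)$; chaining with the bound from Lemma \ref{lem:2} then produces the desired estimate, the square appearing on the right because the Lemma \ref{lem:2} bound is linear in the $L^2$ quantity while the target bound in Corollary \ref{cor:1} is quadratic.

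The only mildly delicate point, and the place I would be most careful, is the \emph{uniformity in $s$} of the trace constant on an interior slice rather than on the boundary of $\Cyl(0,1)$. This is easy to address by decomposing $\Cyl(0,1)=\Cyl(0,s)\cup\Cyl(s,1)$ and applying the standard boundary trace theorem on each of these two sub-cylinders, which are uniformly Lipschitz (indeed, smoothly diffeomorphic to a fixed rectangle via scaling in $s$) with trace constants independent of $s\in(0,1)$. Taking the maximum of the two contributions then gives the estimate uniformly in $s$, completing the proof.
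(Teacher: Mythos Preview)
Your approach is exactly the one the paper intends: the corollary is stated immediately after Lemma~\ref{lem:2} with the sole justification ``Applying the Sobolev Trace Theorem gives the following,'' and your two-step deduction (Lemma~\ref{lem:2} followed by a trace/restriction bound for $\nabla v\in W^{1,2}$) is precisely that.

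One small point deserves tightening. Your argument for uniformity in $s$ asserts that both sub-cylinders $\Cyl(0,s)$ and $\Cyl(s,1)$, being ``smoothly diffeomorphic to a fixed rectangle via scaling in $s$,'' have trace constants independent of $s$. That is not true: under the rescaling $t\mapsto st$ the $L^2$ part of the $W^{1,2}$ norm picks up a factor $s^{-1}$, so the trace constant on $\Cyl(0,s)$ blows up as $s\downarrow 0$ (and symmetrically for $\Cyl(s,1)$ as $s\uparrow 1$). The fix is immediate and in the spirit of what you wrote: for each $s\in(0,1)$ at least one of the two sub-cylinders has length $\geq \tfrac12$, and the trace constant on a cylinder of length $\geq\tfrac12$ is uniformly bounded; use that one. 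Alternatively, bypass the splitting entirely via the one-dimensional embedding $W^{1,2}((0,1))\hookrightarrow C^0([0,1])$ applied to $s\mapsto \nabla v(s,\theta)$ and integrated in $\theta$, which gives
\[
\sup_{s\in[0,1]}\int_{\{s\}\times S^1}|\nabla v|^2 \;\leq\; C\,\|\nabla v\|_{W^{1,2}(\Cyl(0,1))}^2 \;\leq\; C\,\|v\|_{W^{2,2}(\Cyl(0,1))}^2
\]
with a single fixed constant.
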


We now establish a differential inequality for $\vartheta(s)$. This is similar to \cite[Lemma 2.1]{LinWang}, but without requiring a bound on $\sup |\nabla u|$. It is proved analogously to \cite[Lemma 2.13]{toppingAnnals}, working on cylinders instead of annuli and considering a general target $N$.

\begin{lemma}
\label{lem:diffineq}
There exists a constant $\delta>0$ depending on $N$ such that 
for $u \in C^\infty(\Cyl(-1,2),N)$ satisfying $E(u;\Cyl(-1,2))<\de$
and $\norm{\tau}^2_{L^2(\Cyl(-1,2))} < \delta$, and for any $s\in (0,1)$,
we have the differential inequality
\begin{equation*}
\vartheta''(s) \geq \vartheta(s) - 2 \int\limits_{\{s\} \times S^1}  |\tau|^2. 
\end{equation*}
\end{lemma}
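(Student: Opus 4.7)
The plan is to compute $\vartheta''(s)$ directly, use the tension equation to eliminate the $u_{ss}$ that appears, and then close the resulting expression by combining Wirtinger's inequality on $S^1$ with the pointwise smallness provided by Corollary \ref{cor:1}. Starting from $\vartheta''(s)=2\int|u_{s\theta}|^2+2\int u_\theta\cdot u_{ss\theta}\,d\theta$ and integrating by parts in $\theta$ (with no boundary contribution on $S^1$), I rewrite the second term as $-2\int u_{\theta\theta}\cdot u_{ss}\,d\theta$. Substituting the tension formula $u_{ss}=\tau-u_{\theta\theta}-A(u_s,u_s)-A(u_\theta,u_\theta)$, and using that $A$ takes values in the normal bundle together with the Gauss formula $(u_{\theta\theta})^\perp=-A(u_\theta,u_\theta)$ (consistent with the sign convention implicit in the definition of $\tau$), the remaining $A$-terms rewrite only against the normal part of $u_{\theta\theta}$, yielding
\begin{equation*}
\vartheta''(s)=2\int|u_{s\theta}|^2+2\int|u_{\theta\theta}|^2-2\int u_{\theta\theta}\cdot\tau-2\int|A(u_\theta,u_\theta)|^2-2\int A(u_\theta,u_\theta)\cdot A(u_s,u_s).
\end{equation*}

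Next, because $u_\theta$ has mean zero on $S^1$ (by periodicity), Wirtinger's inequality gives $\int|u_{\theta\theta}|^2\geq\vartheta(s)$, while Young's inequality produces $-2\,u_{\theta\theta}\cdot\tau\geq-\tfrac12|u_{\theta\theta}|^2-2|\tau|^2$. Inserting both, and bounding the $A$-terms by $|A(X,X)|\leq\|A\|_\infty|X|^2$, one obtains
\begin{equation*}
\vartheta''(s)\geq\tfrac32\int|u_{\theta\theta}|^2+2\int|u_{s\theta}|^2-2\int|\tau|^2-C\int\bigl(|u_\theta|^4+|u_\theta|^2|u_s|^2\bigr),
\end{equation*}
with $C$ depending only on $N$; the leading terms $\vartheta(s)-2\int|\tau|^2$ already match the target, provided the quartic error can be absorbed into the positive margins.

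To absorb the quartic error, I combine the 1D Sobolev estimate $\|v\|_{L^\infty(S^1)}^2\leq C(\|v\|_{L^2(S^1)}^2+\|v'\|_{L^2(S^1)}^2)$ with the pointwise bounds supplied by Corollary \ref{cor:1}. Applied to $u_\theta$ and $u_s$ respectively this yields $\int|u_\theta|^4\leq\|u_\theta\|_{L^\infty}^2\,\vartheta(s)\leq C\vartheta(s)(\vartheta(s)+\int|u_{\theta\theta}|^2)$ and $\int|u_\theta|^2|u_s|^2\leq C\vartheta(s)(e(s)+\int|u_{s\theta}|^2)$, where $e(s)=\int|u_s|^2$. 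Corollary \ref{cor:1} in turn gives $\vartheta(s)+e(s)\leq C\delta$ pointwise for $s\in(0,1)$ once $\delta$ is small enough that the small-energy hypothesis on $\Cyl(-1,2)$ applies. Hence both quartic terms are bounded by $C\delta\bigl(\vartheta(s)+\int|u_{\theta\theta}|^2+\int|u_{s\theta}|^2\bigr)$. Choosing $\delta$ sufficiently small (depending only on $N$) so that $C\delta$ is much smaller than $\tfrac12$ allows the $\int|u_{\theta\theta}|^2$ and $\int|u_{s\theta}|^2$ contributions of the error to be absorbed into the $\tfrac12\int|u_{\theta\theta}|^2$ and $2\int|u_{s\theta}|^2$ margins available above, and the $C\delta\vartheta(s)$ contribution to be absorbed into $\vartheta(s)$ via Wirtinger. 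The final inequality $\vartheta''(s)\geq\vartheta(s)-2\int|\tau|^2$ then follows. The main bookkeeping hurdle is to balance Young's inequality so that the coefficient in front of $\int|\tau|^2$ is exactly $-2$ while still retaining enough of $\int|u_{\theta\theta}|^2$ to cover both Wirtinger and the absorption of the normal-curvature error; the choice of exponent $\tfrac12$ is precisely what threads this needle.
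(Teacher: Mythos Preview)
Your argument is correct and follows essentially the same strategy as the paper: compute $\vartheta''(s)$ explicitly, control the cross term $u_{\theta\theta}\cdot\tau$ by Young's inequality with the precise constant $2$, absorb the quartic second-fundamental-form errors using the smallness from Corollary~\ref{cor:1}, and close with Wirtinger's inequality. The one technical difference is in how you treat the terms $u_{\theta\theta}\cdot A(u)(u_s,u_s)$ and $u_{\theta\theta}\cdot A(u)(u_\theta,u_\theta)$: the paper integrates the first by parts in $\theta$ (producing a $|u_{s\theta}||u_s||u_\theta|$ term) and applies Young's inequality to the second, whereas you use the Gauss formula $(u_{\theta\theta})^\perp=-A(u_\theta,u_\theta)$ to rewrite both directly as $-|A(u_\theta,u_\theta)|^2-A(u_\theta,u_\theta)\cdot A(u_s,u_s)$. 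Your route is slightly cleaner and avoids the integration by parts, but both reduce to the same quartic error $C\int(|u_\theta|^4+|u_\theta|^2|u_s|^2)$ and the same absorption via Corollary~\ref{cor:1}.
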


\begin{proof}[Proof of Lemma \ref{lem:diffineq}]

	From the proof of \cite[Lemma 3.7]{RTnonpositive} we have the expression
	\begin{equation}
	\label{eq:thetadashdash}
	\vartheta''(s) = 2 \int_{ \{s\} \times S^1} |u_{s \theta}|^2 + |u_{\theta \theta}|^2 - u_{\theta \theta} \cdot \tau + u_{\theta \theta} \left[ A(u) ( u_s, u_s) + A(u) (u_\theta, u_\theta) \right]. 
	\end{equation}
	We can estimate the penultimate term as in \cite{RTnonpositive} using integration by parts and Young's inequality:
\begin{equation*}
\begin{aligned}
\left| 2\int u_{\theta \theta} \cdot \left[ A(u)(u_s,u_s) \right] \right| 
&\leq 	C \int |u_\theta|^2 |u_s|^2 + |u_{s \theta}| |u_s| |u_\theta|  \\ 
&\leq \int |u_{s \theta}|^2 +
C \int  |u_\theta|^2 |u_s|^2,
\end{aligned}
\end{equation*}
while the final term of \eqref{eq:thetadashdash} requires just Young's inequality:
\begin{equation*}
\left| 2\int u_{\theta \theta} \cdot \left[ A(u)(u_\theta,u_\theta) \right] \right| 
\leq C \int |u_{\theta \theta}| |u_\theta|^2 
\leq \frac{1}{4}\int |u_{\theta \theta}|^2 + C \int |u_\theta|^4
\end{equation*}
where $C$ is a constant only depending on $N$, that is revised at each step. Summing gives
	\begin{align}
	\begin{split}
		2\left| \int_{\{s\} \times S^1} u_{\theta \theta} \cdot \left[ A(u) ( u_s, u_s) + A(u) (u_\theta, u_\theta) \right] \right| \leq C &\int_{\{s\} \times S^1} |u_\theta|^2 ( |u_s|^2 + |u_\theta|^2 ) \\ 
		+ &\int_{\{s\} \times S^1} |u_{s\theta}|^2 + \frac{1}{4} |u_{\theta \theta}|^2 \label{eq:ineq1}.
	\end{split}
	\end{align}

To apply Corollary \ref{cor:1}, we can ask that $\delta < \delta_0$, 
and thus handle the first term on the right-hand side as follows:
\begin{align*}
	\int_{\{s\} \times S^1} &|u_\theta|^2 ( |u_s|^2 + |u_\theta|^2 ) \\
	&\leq C \sup_{\{s\}\times S^1} |u_\theta|^2 \left( \norm{ \nabla u}_{L^2(\Cyl(-1,2))} +\|\tau\|_{L^2(\Cyl(-1,2))} \right)^2  \\
	&\leq C \left( \int_{\{s\} \times S^1} |u_{\theta \theta} |^2 \right)  \delta 
\end{align*}
and thus for $\delta$ sufficiently small, depending on $N$, we can improve \eqref{eq:ineq1} to
\begin{equation}
		2\left| \int_{\{s\} \times S^1} u_{\theta \theta} \cdot \left[ A(u) ( u_s, u_s) + A(u) (u_\theta, u_\theta) \right] \right| \leq 
\int_{\{s\} \times S^1} |u_{s\theta}|^2 + \frac{1}{2} |u_{\theta \theta}|^2 \label{eq:ineq_new}.
\end{equation}

It remains to estimate the inner product of $u_{\theta \theta}$ with the tension in \eqref{eq:thetadashdash}.
By Young's inequality, we have
	\begin{align}
	\left| 2 \int_{\{s\} \times S^1} u_{\theta \theta} \cdot \tau \right| 
	\leq \frac{1}{2} \int_{\{s\} \times S^1}  |u_{\theta \theta} |^2 + 2 \int_{\{s\} \times S^1}|\tau|^2,  \label{eq:ineq3}
	\end{align}
and so combining \eqref{eq:ineq_new} and \eqref{eq:ineq3} with \eqref{eq:thetadashdash} gives the estimate
	\begin{align*}
	\begin{split}
		\vartheta''(s) \geq \, 2& \int_{ \{s\} \times S^1} |u_{s \theta}|^2 + |u_{\theta \theta}|^2  \\
				         -& \left( \frac{1}{2} \int_{\{s\} \times S^1}  |u_{\theta \theta} |^2  + 2  \int_{\{s\} \times S^1}|\tau|^2  +  \int_{\{s\} \times S^1}  |u_{s\theta}|^2 + \frac{1}{2} |u_{\theta \theta}|^2   \right)
	\end{split}
\\	\geq  & \int_{ \{s\} \times S^1} |u_{\theta \theta}|^2 - 2 \int_{\{s\} \times S^1}|\tau|^2  \\
	\geq & \int_{ \{s\} \times S^1} |u_{\theta }|^2 - 2 \int_{\{s\} \times S^1}|\tau|^2 
	\end{align*}
		by Wirtinger's inequality.
\end{proof}	

Lemma \ref{lem:diffineq} can be applied all along a long cylinder $\Cyl_\Lambda$ as arising in Lemma \ref{lem:angularenergy}, and we can analyse the resulting differential inequality as in the next lemma to deduce bounds on $\vartheta$.
\begin{lemma}
	\label{lem:ode}
	Consider a smooth function $f:[S_1,S_2] \to \mathbb{R}$ satisfying the inequality
	\begin{equation}\label{eq:ODE}
	 f''(s)- f(s) \geq - 2 \Tau(s),
	\end{equation}
	with given boundary values $f(S_1)$, $f(S_2) \in [0,2E_0]$, 
	and $\Tau:[S_1,S_2] \to [0,\infty)$ smooth.
	Then
	$$ f(s) \leq 2E_0 \left( e^{s-S_2} + e^{S_1-s}\right) + \int\limits_{S_1}^{S_2} e^{-|s-q|} \Tau(q) dq $$
	for $s \in (S_1+1,S_2-1)$.
\end{lemma}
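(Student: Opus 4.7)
The plan is to prove Lemma \ref{lem:ode} by a direct comparison (maximum-principle) argument, using the right-hand side of the claimed inequality as an explicit supersolution of the same ODE that $f$ satisfies as a subsolution.

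First, I would introduce the comparison function
$$g(s) := 2E_0\bigl(e^{s-S_2} + e^{S_1-s}\bigr) + \int_{S_1}^{S_2} e^{-|s-q|}\,\Tau(q)\,dq$$
and verify that $g''(s) - g(s) = -2\Tau(s)$ on $(S_1,S_2)$. The homogeneous part $2E_0(e^{s-S_2}+e^{S_1-s})$ is visibly annihilated by $\partial_s^2 - 1$. For the convolution part I would split the integral at $q=s$, writing
$$h(s) = e^{-s}\int_{S_1}^{s} e^{q}\Tau(q)\,dq \;+\; e^{s}\int_{s}^{S_2} e^{-q}\Tau(q)\,dq,$$
and differentiate twice: the boundary terms produced by Leibniz in $h'$ cancel, and the analogous terms in $h''$ combine to produce the source $-2\Tau(s)$, yielding $h'' - h = -2\Tau$. (This merely reflects that $\tfrac12 e^{-|s-q|}$ is the Green's function of $-\partial_s^2 + 1$ on the line.)

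Next I would check boundary values. Since $\Tau \geq 0$, the convolution part of $g$ is nonnegative, and $2E_0(e^{s-S_2}+e^{S_1-s}) \geq 2E_0$ on the whole of $[S_1,S_2]$; together with the hypothesis $f(S_1), f(S_2) \in [0, 2E_0]$, this yields $g(S_j) \geq 2E_0 \geq f(S_j)$ for $j = 1, 2$.

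Finally, set $w := f - g$. Subtracting the identity $g'' - g = -2\Tau$ from the hypothesis \eqref{eq:ODE} gives $w'' \geq w$, while $w \leq 0$ on $\{S_1, S_2\}$. If $w$ attained a strictly positive maximum at some interior point $s_0 \in (S_1, S_2)$, then $w(s_0) > 0$ and $w''(s_0) \leq 0$, contradicting $w''(s_0) \geq w(s_0) > 0$. Hence $w \leq 0$ throughout $[S_1, S_2]$, i.e.\ $f \leq g$ on the whole interval, which is strictly stronger than the stated conclusion on $(S_1+1, S_2-1)$. The only point requiring any care is the differentiation of the convolution (keeping correct track of the boundary terms arising from the jump of $\partial_s e^{-|s-q|}$ at $q=s$), which is routine; I do not foresee any genuine obstacle.
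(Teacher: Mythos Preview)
Your proof is correct and follows essentially the same approach as the paper: construct the explicit solution $\tilde f = g$ of the equality case, check it dominates $f$ at the endpoints, and apply the maximum principle. One small slip: the homogeneous part $2E_0(e^{s-S_2}+e^{S_1-s})$ is \emph{not} $\geq 2E_0$ on all of $[S_1,S_2]$ (it dips well below $2E_0$ in the middle when $S_2-S_1$ is large), but this inequality does hold at the two endpoints $s=S_1,S_2$, which is all you actually use.
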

\begin{proof}
	Recall that in the equality case for \eqref{eq:ODE} a solution $\tilde f$ can be written explicitly as %
	\begin{equation*}
		\tilde f(s) := A e^s + B e^{-s} + \int_{S_1}^{S_2} e^{-|s-q|} \Tau(q) dq, \qquad A,B\in\R.
	\end{equation*}
	We then select $A=2E_0e^{-S_2}$ and $B=2E_0e^{S_1}$ to obtain such a solution for which $\tilde f(S_1)\geq 2E_0\geq f(S_1)$ 
	and $\tilde f(S_2)\geq 2E_0\geq f(S_2)$. The maximum principle implies $\tilde f\geq f$ and thus the claim. 
\end{proof}

We now apply the estimate from Lemma \ref{lem:ode} to establish decay of angular energy.

\begin{proof}[Proof of Lemma \ref{lem:angularenergy}.]
First note that we may assume that $\Lambda\geq 1$, otherwise the lemma is vacuous.
By definition of $\vartheta$, and the upper bound on the total energy, 
we have
	\begin{equation*}
		\int_{-\Lambda}^{\Lambda} \vartheta(q) dq \leq 2E(u; \Cyl_\Lambda) \leq 2E_0 .
	\end{equation*}
We choose $\delta>0$ smaller than both the $\delta$ of Lemma \ref{lem:diffineq} and the $\delta_0$ in Corollary \ref{cor:1}.

	From the above we obtain that there must exist $S_1 \in [-\Lambda,-\Lambda+1)$ and $S_2 \in (\Lambda-1,\Lambda]$ such that $\vartheta(S_1) \leq 2E_0$ and $\vartheta(S_2) \leq 2E_0$. 
As before, we write
	\begin{align*}
		\Tau(s) := \int\limits_{\{s\}\times S^1} |\tau|^2.
	\end{align*}
Then by Lemma \ref{lem:diffineq}, $\vartheta$ satisfies $\vartheta'' - \vartheta \geq -2\Tau$ on $[S_1,S_2]$. Applying Lemma \ref{lem:ode} then gives the first conclusion \eqref{eq:angularestimate} of Lemma \ref{lem:angularenergy}.

To prove the energy estimate \eqref{claim1} we integrate \eqref{eq:angularestimate} and obtain
	\begin{equation}
		\int_{-\Lambda + \lambda}^{\Lambda - \lambda}   \vartheta(s) ds \leq 
		C  \int_{-\Lambda + \lambda}^{\Lambda - \lambda}   e^{|s|-\Lambda} ds
			+ \int_{-\Lambda + \lambda}^{\Lambda - \lambda}  \int_{-\Lambda}^{\Lambda} e^{-|s-q|} 
			\Tau(q) dq ds .\label{est:theta-in-proof}
	\end{equation}
	
	We can calculate the first integral on the right-hand side explicitly:
	\begin{equation}
		\int_{-\Lambda + \lambda}^{\Lambda - \lambda}   e^{|s|-\Lambda} ds = 2 \left( e^{-\lambda} 
											   - e^{-\Lambda}\right)
											\leq 2 e^{-\lambda}.
\label{eq:exponentialbound}
	\end{equation}	
In the second integral we change the order of integration
\begin{equation*}
\int_{-\Lambda + \lambda}^{\Lambda - \lambda}   \int_{-\Lambda}^{\Lambda} e^{-|s-q|} \Tau(q) dq ds 
= \int_{-\Lambda}^{\Lambda} \Tau(q) \int_{-\Lambda + \lambda}^{\Lambda -	\lambda}   e^{-|s-q|}  ds dq, 
\end{equation*}	
and estimate
$$\int_{-\Lambda + \lambda}^{\Lambda - \lambda}   e^{-|s-q|}  ds
\leq \int_{-\infty}^{\infty}   e^{-|s-q|}  ds=2,$$
to find that 
	\begin{equation*}
		\int_{-\Lambda + \lambda}^{\Lambda - \lambda}   \int_{-\Lambda}^{\Lambda} e^{-|s-q|} 
			\Tau(q) dq ds \leq 2 \int_{-\Lambda}^{\Lambda} \Tau(q) dq  \leq 2 \norm{ \tau}_
														{L^2(\Cyl_\Lambda) }^2.
\end{equation*}
Together with \eqref{est:theta-in-proof} and \eqref{eq:exponentialbound} this implies 
claim \eqref{claim1}. 
To prove \eqref{claim2} we compute
%
\begin{equation}
\label{eq:energy-loss-without-norm}
E(u;\Cyl_{\Lambda -\lambda}) = 
\half\int_{\Cyl_{\Lambda -\lambda}} ( |u_\theta|^2 + |u_s|^2) \,d\theta ds =
\frac12\int_{\Cyl_{\Lambda -\lambda}} (\abs{u_s}^2-\abs{u_\theta}^2) \,d\theta ds+
\int_{\Cyl_{\Lambda -\lambda}} |u_\theta|^2 d\theta ds
	\end{equation}
so by \eqref{claim1} and the definition \eqref{def:hopfdiff} of $\Phi$, see also \eqref{eq:L1-Phi-scaling}, we have
\begin{equation*}
E(u;\Cyl_{\Lambda -\lambda}) \leq	C  e^{-\lambda}  + 2\norm{\tau}_{L^2(\Cyl_\Lambda)}^2 + \frac14\norm{\Phi}_{L^1(\Cyl_{\Lambda -\lambda})}.
\end{equation*}
\end{proof}

%
\section{Proofs of the main theorems; convergence to full bubble branches}
\label{sect3}

Our main initial objective in this section is to prove Theorem \ref{new-thm}, giving convergence of almost-harmonic maps to full bubbles branches.
This will then be combined with the Poincar\'e estimate for quadratic differentials of \cite{RT2} to give
Theorem \ref{mainthm}.
Theorem \ref{new-thm} will also lead us to 
a proof of the analogous result for tori, Theorem \ref{thm:torus}.
But we begin with a proof of the almost-standard Proposition \ref{get_ti}.

\begin{proof}[Proof of Proposition \ref{get_ti}]
It is a standard idea (see e.g. \cite[section 3]{RTZ}) that by integrating \eqref{energy-identity} from $t=0$ to $t=\infty$, and appealing to the boundedness of the energy, one can see that it is possible to extract a sequence of times $t_i\to\infty$ at which $(u(t_i),g(t_i))$ is a sequence of almost-minimal maps.
Note here that by a simple computation, we have 
$$\|P_{g_i}(\Phi(u_i,g_i))\|_{L^2(M,g_i)}^2=2\|Re(P_{g_i}(\Phi(u_i,g_i)))\|_{L^2(M,g_i)}^2.$$
In order to modify this argument to ensure that $\ell(g(t_i))\to 0$ as $i\to\infty$ (by virtue of the hypothesis $\liminf_{t\to\infty}\ell(g(t))=0$) we must argue that it is impossible for $\ell(g(t))$ to spend almost all of the time away from zero, but drop quickly and occasionally down near zero. 
When the genus of $M$ is at least $2$, this is ruled out by \cite[Lemma 2.3]{RTnonpositive}, which implies in particular that
$$\bigg|\frac{d}{dt}\ell(g(t))\bigg|\leq C,$$ 
whenever $\ell<2\arsinh(1)$,
where $C$ depends on an upper bound for the energy, e.g. $E_0:=E(u(0),g(0))$, the genus of $M$ and the coupling constant $\eta$. 
Alternatively, for any genus, we know that for $\ell(g(t))$ to oscillate in this bad way would require $g(t)$ to move each time a definite distance in Teichm\"uller space equipped with the Weil-Petersson metric (see e.g. \cite[\S 4.4]{RTnonpositive}) but $g(t)$ represents a uniformly $C^{0,1/2}$ path in Teichm\"uller space because the distance moved between times $0<t_1<t_2<\infty$
is proportional to 
$$\int_{t_1}^{t_2}\|\pt g\|_{L^2}dt\leq \eta\int_{t_1}^{t_2}\left(-\frac{dE}{dt}\right)^\half dt
\leq \eta(t_2-t_1)^{1/2}E_0^{1/2},$$
by \eqref{energy-identity}
\end{proof}

\begin{proof}[Proof of Theorem \ref{new-thm}]
Let $u_i:\Cyl_{X_i}\to N$ be a sequence of smooth almost harmonic maps as considered in Theorem \ref{new-thm}. 
The first task is to construct sequences $s_i^m$ as in the statement of the theorem. We would like to apply \eqref{claim2} on the regions $\Cyl(s_i^{m-1} + \lambda, s_i^{m} - \lambda)$ to the maps $u_i$ for large $i$
so we let $\delta > 0$ be as in Lemma \ref{lem:angularenergy}, which will be independent of $i$, of course.

We proceed to construct auxiliary sequences $\hat{s}_i^m$, where $m \in \{0, \dots, \hat{m}+1\}$ for some $\hat{m} \geq 0$. 
For each $i$, consider the
overlapping chunks of length 3 of the form $(k-1,k+2)\times S^1\subset \Cyl_{X_i}$ for  $k \in \mathbb{Z}$, i.e. for integral $k$ such that $-X_i < k - 1 < k+2 < X_i$. These chunks cover 
$\Cyl_{X_i}$ except 
possibly for cylinders of length no more than $1$ at the ends. 

For each $i$, we initially choose the numbers $\hat{s}_i^m$, for $m=1,2,\ldots,m_i$,
to be the increasing sequence of integers so that $(\hat{s}_i^m-1,\hat{s}_i^m+2)\times S^1$ are precisely the chunks above that have energy at least $\frac{\delta}{2}$.

Note that by the bound on the total energy, there is a uniform bound on the number $m_i$ of such chunks, 
depending only on $N$ and $E_0$. Finally, we add in $\hat{s}_i^0 = -X_i$ and $\hat{s}_i^{m_i + 1} = X_i.$ By passing to a subsequence of the $u_i$ we can assume that for each $i$, we have the same number of sequence elements $\hat{s}_i^m$, i.e. $m_i=\hat m$ for each $i$. Note also that for any region $(s-1,s+2)\times S^1\subset\Cyl_{X_i}$ with energy at least $\delta$ there is some associated overlapping integer chunk $(k-1,k+2)\times S^1\subset\Cyl_{X_i}$ of energy at least $\frac{\delta}{2}$ which is assigned a label in the above construction, except possibly for regions very close to the ends of the cylinder in the sense that $s-1<-X_i+1$ or $s+2>X_i-1$.

From this auxiliary sequence we form $s_i^m$. Set 
$s_i^0 = -X_i=\hat{s}_i^0$, and consider the difference $\hat{s}_i^1-s_i^0$. If this has a subsequence converging to infinity, pass to that subsequence and take $s_i^1 = \hat{s}_i^1$; 
if not, discard $\hat{s}_i^1$. Proceed iteratively to define $s_i^m$ (i.e. $s_i^2$ is the next $\hat{s}_i^m$ such that the respective difference $\hat{s}_i^m-s^1_i$ diverges for some subsequence, after having passed to that subsequence). This process will terminate with the selection of 
$s_i^{\bar m}$,
for some $\bar{m}$. Whatever sequence $s_i^{\bar{m}}$ was chosen, redefine it as $s_i^{\bar{m}} = X_i$, which can only change it by an amount that is uniformly bounded in $i$. This finishes the construction. 

Note that in particular, for $m\in\{1,2,\ldots, \bar{m}-1\}$,
we can carry out the shift $u_i^m(s, \theta) := u_i(s + s^m_i, \theta)$ 
and obtain convergence to a nontrivial bubble branch for each $m$ with associated bubbles $\{\Om_j\}$ after passing to a further subsequence (recall Definition \ref{bb_def}) which completes the proof of Part 1 of the theorem. 

Next we consider the connecting cylinders $\Cyl(s_i^{m-1}+\lambda,s_i^m-\lambda)$
for $m\in\{1,2,\ldots, \bar{m}\}$ and large $\lambda$. By construction, there exists a constant $K>0$ such that 
$$
E( u_i; \Cyl(s-1,s+2))<\delta \text{ for } s \in (s_i^{m-1}+K+1,s_i^m-K-2), 
$$
for sufficiently large $i$ (otherwise we would not have discarded the respective $\hat{s}_i^m$). Now let 
\begin{equation*}
	\Lambda_i^m = \frac{(s_i^m - K) - (s_i^{m-1} + K)}{2}= \frac{s_i^m - s_i^{m-1} - 2K}{2}.
\end{equation*}
By translation we can consider $u_i$ on $\Cyl_{\Lambda_i^m}$. We denote the shifted maps as 
$$\hat u_i^m(s,\theta)= u_i\left(s+\frac{s_i^m+s_i^{m-1}}{2}, \theta\right).$$ 
For each $\la>1$,
the estimate \eqref{claim2} from Lemma \ref{lem:angularenergy}
applies (as in particular we have no concentration of energy) for sufficiently large $i$, giving
\beqa
	E(u_i;\Cyl(s^{m-1}_i  + K + \lambda, s^{m}_i  - K - \lambda)) =&\, E(\hat u_i^m;\Cyl_{\Lambda_i^m -\lambda}) 
	\\
	\leq & \,
	\label{aaa}
	C e^{-\lambda}  + 2\norm{\tau(\hat u_i^m)}_{L^2(\Cyl_{\Lambda_i^m})}^2  + \frac{1}{4}\norm{\Phi(\hat u_i^m)}_{L^1(\Cyl_{\Lambda_i^m -\lambda})}.
\nonumber
\eeqa
Taking the limit $i\to\infty$, and using 
the assumption \eqref{ass:almost-harmonic}
we find that
\begin{equation*}
\limsup_{i \to \infty} E(u_i;\Cyl(s^{m-1}_i  + K + \lambda, s^{m}_i  - K - \lambda)) \leq C e^{-\lambda}+\frac14\limsup_{i\to \infty }\norm{\Phi(u_i)}_{L^1(\Cyl_{X_i})}.
\end{equation*}
Letting $\lambda \to \infty$ proves that the `no-loss-of-energy' claim \eqref{noloss_statement_new} 
holds true provided the maps satisfy the additional assumption \eqref{ass:Hopf-0}, which completes the proof of Part 3 of the theorem. We remark that this last step is the only part of the proof where \eqref{ass:Hopf-0} is used.

Finally we consider the quantity 
\begin{equation*}
	\sup_{s \in (s_i^{m-1}+ K+ \lambda, s_i^m - K - \lambda)} \text{osc}(u_i, \{s\} \times S^1),
\end{equation*} again for $1 < \lambda < \Lambda_i^m$. After applying the same shift as above, this is equivalent to
\begin{equation*}
	\sup_{s \in (-\Lambda^m_i + \lambda, \Lambda^m_i - \lambda)} \text{osc}(\hat u_i^m, \{s\} \times S^1).
\end{equation*} 
On each circle $\{s\} \times S^1$ we have a bound on the (shifted) angular energy
$\vartheta_i^m(s) := \vartheta(\hat u_i^m,s)$  from Lemma \ref{lem:angularenergy} (at least for sufficiently large $i$) for $s \in (-\Lambda^m_i + \lambda, \Lambda^m_i - \lambda )$ given by
\begin{equation*}
	\vartheta_i^m(s) \leq C e^{|s|-\Lambda_i^m} + \int_{-\Lambda_i^m}^{\Lambda_i^m} e^{-|s-q|} \Tau(q) dq  \leq C e^{|s|-\Lambda_i^m} + \int_{-\Lambda_i^m}^{\Lambda_i^m} \Tau(q) dq,
\end{equation*}
and thus we have
\begin{equation*}
\begin{aligned}
	\sup_{s \in (-\Lambda^m_i + \lambda, \Lambda^m_i - \lambda)} \vartheta_i^m(s) &\leq C e^{-\lambda} + \int_{-\Lambda_i^m}^{\Lambda^m_i} \Tau(q) dq \\
	&\leq C e^{-\lambda} + \norm{\tau}_{L^2(\Cyl_{\Lambda^m_i})}^2.
\end{aligned}
\end{equation*}
Taking limits, and using once more \eqref{ass:almost-harmonic}, gives
\begin{equation*}
\limsup_{i \to \infty} \sup_{s \in (-\Lambda^m_i + \lambda, \Lambda^m_i - \lambda)} \vartheta_i^m(s) \leq C e^{-\lambda},
\end{equation*}
and then 
\begin{equation*}
\lim_{\la\to\infty}\limsup_{i \to \infty} \sup_{s \in (-\Lambda^m_i + \lambda, \Lambda^m_i - \lambda)} \vartheta_i^m(s) =0.
\end{equation*}
We conclude by observing that by the fundamental theorem of calculus and Cauchy-Schwarz, we can control the oscillation of 
$\hat u_i^m$ on a circle in terms of the angular energy on that circle, by
\begin{equation*}
\left[\osc(\hat u_i^m, \{s\} \times S^1)\right]^2 \leq 2\pi\vartheta_i^m(s),
\end{equation*}
which implies the  oscillation bound for $u_i$ claimed as Part 2 of the theorem.
\end{proof}

The principle of the above proof also allows us to estimate the loss of energy on the connecting cylinders for sequences  $u_i$ of almost harmonic maps that do not satisfy \eqref{ass:Hopf-0},
as we shall require in the case $M=T^2$.
 
 \begin{lemma}\label{cor:new}
  In the setting of Theorem \ref{new-thm}
  we have that for each $m=2,\ldots, \bar m-1$
\beq \label{eq:loss-energy}
\lim_{\la\to\infty}\limsup_{i\to\infty} 
\bigg|E(u_i;
\Cyl(s_i^{m-1}+\la,s_i^m-\la)) -\frac12\int_{\Cyl(s_i^{m-1},s_i^m)} (\abs{\partial_s u_i}^2-\abs{\partial_\theta u_i}^2) \,d\theta ds \bigg|=0
\eeq
while for $m=1\neq \bar m$, respectively $m=\bar m\neq 1$, respectively $m=1=\bar m$ the analogue of \eqref{eq:loss-energy}    
holds true with the integral over 
$\Cyl(s_i^{0}+\lambda,s_i^1)$,
respectively
$\Cyl(s_i^{\bar m-1},s_i^{\bar m}-\lambda)$, respectively $\Cyl(s_i^{0}+\lambda,s_i^{\bar m}-\lambda)$.
\end{lemma}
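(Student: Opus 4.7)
The starting point is the identity used already as \eqref{eq:energy-loss-without-norm},
\begin{equation*}
E(u_i; \Cyl(a,b)) = \tfrac{1}{2}\int_{\Cyl(a,b)} (|\partial_s u_i|^2 - |\partial_\theta u_i|^2) \, d\theta\, ds + \int_{\Cyl(a,b)} |\partial_\theta u_i|^2 \, d\theta\, ds.
\end{equation*}
Applied with $(a,b)=(s_i^{m-1}+\la, s_i^m-\la)$ and compared with the stated skew integral over $\Cyl(s_i^{m-1}, s_i^m)$, the quantity inside the absolute value in \eqref{eq:loss-energy} decomposes (for $m \in \{2, \ldots, \bar m - 1\}$) into (a) the angular energy on the middle $\Cyl(s_i^{m-1}+\la, s_i^m-\la)$, and (b) the two skew integrals over the short strips $\Cyl(s_i^{m-1}, s_i^{m-1}+\la)$ and $\Cyl(s_i^m-\la, s_i^m)$ adjacent to the bubble points. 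Each piece must vanish in the iterated limit $\lim_{\la\to\infty}\limsup_{i\to\infty}$.

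Part (a) is handled exactly as in the final step of the proof of Theorem \ref{new-thm}: after the translation used there, which identifies $\Cyl(s_i^{m-1}+K, s_i^m-K)$ with $\Cyl_{\Lambda_i^m}$, the small-energy hypothesis of Lemma \ref{lem:angularenergy} holds by the construction of the $s_i^m$, and \eqref{claim1} yields a bound of the form $Ce^{-\la} + 2\|\tau(u_i)\|_{L^2(\Cyl_{X_i})}^2$, which vanishes thanks to \eqref{ass:almost-harmonic}. Part (b) is the new ingredient. After the shift $v_i(s,\theta) := u_i(s+s_i^{m-1},\theta)$, the maps $v_i$ converge on every fixed $\Cyl(0,\la)$ to a bubble branch with harmonic weak limit $v_\infty:\R\times S^1\to N$ and finitely many bubbles $\om_j:S^2\to N$. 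The key observation is that all these limit maps are conformal: each $\om_j$, being a harmonic map from $S^2$, has vanishing Hopf differential because $S^2$ carries no nonzero holomorphic quadratic differentials; and $v_\infty$, which by Definition \ref{bb_def} extends after removing two singularities to a harmonic map on $S^2$, is conformal for the same reason. Hence $|\partial_s v_\infty|^2 = |\partial_\theta v_\infty|^2$ pointwise on $\R\times S^1$ and analogously for each $\om_j$ in its conformal chart.

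It then remains to decompose the skew integral of $v_i$ over $\Cyl(0,\la)$ across the bubble tree. Using the no-neck decomposition \eqref{eq:strong-conv-bubble-tree} on small neighbourhoods of each bubble point, together with strong $W^{1,2}_{loc}$ convergence away from bubble points and the scale invariance of the integrand $(|\partial_s u|^2 - |\partial_\theta u|^2)\, d\theta\, ds$ under domain rescalings, one obtains
\begin{equation*}
\lim_{i\to\infty} \int_{\Cyl(0,\la)} (|\partial_s v_i|^2 - |\partial_\theta v_i|^2) = \int_{\Cyl(0,\la)} (|\partial_s v_\infty|^2 - |\partial_\theta v_\infty|^2) + \sum_j \int_{\R^2} (|\partial_x \om_j|^2 - |\partial_y \om_j|^2),
\end{equation*}
with cross terms between distinct bubbles vanishing thanks to the separation condition \eqref{eq:bubbles-seperate}; every term on the right is zero by conformality. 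This decomposition is the main technical step, since unlike the energy the integrand has indefinite sign, but the argument proceeds along the standard lines of the no-loss-of-energy proof via \eqref{eq:strong-conv-bubble-tree}. The three endpoint cases $m=1\neq\bar m$, $m=\bar m\neq 1$ and $m=1=\bar m$ follow the same scheme, simplified by the fact that whenever one end of $\Cyl_{X_i}$ carries no bubble branch, the corresponding skew integral simply does not appear in the modified version of \eqref{eq:loss-energy}, so that one needs only \eqref{ass:almost-harmonic} and at most one bubble-tree decomposition at the opposite end.
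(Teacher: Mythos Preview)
Your proof is correct and follows essentially the same route as the paper: split the discrepancy into (a) the angular energy on the connecting cylinder, controlled by \eqref{claim1} and \eqref{ass:almost-harmonic}, and (b) the skew integral on the end strips near the bubble branches, which vanishes because all limit objects are conformal. The paper packages step (b) more concisely by observing that the skew integrand is (pointwise dominated by) the $L^1$ density of the Hopf differential and invoking Remark~\ref{rem:conv-Hopf}, which already records that $\Phi(u_i^m)\to 0$ in $L^1_{loc}(\R\times S^1)$ for a bubble branch; you instead rederive the content of that remark inline via the decomposition \eqref{eq:strong-conv-bubble-tree} and conformality of $v_\infty$ and the $\om_j$.
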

To prove this lemma we will use
\begin{rmk}\label{rem:conv-Hopf}
Recalling that bubble maps 
have vanishing Hopf differential (they are harmonic, so have holomorphic Hopf differential, which can only be identically zero on $S^2$) we see that 
in the setting of Theorem \ref{existing_theory} the 
strong $W_{loc}^{1,2}$ convergence of \eqref{eq:strong-conv-bubble-tree} and the fact that 
the bubbles separate as described in \eqref{eq:bubbles-seperate},
tell us that 
$$\Phi(u_i)\to \Phi(u_\infty) \text{ in } L^1_{loc}(\Upsilon).$$
In case of convergence 
to a bubble branch as in Definition \ref{bb_def}, the map $u_\infty$ also has vanishing Hopf differential (for the same reason) so this further reduces to 
$$
\Phi(u_i)\to 0 \text{ in } L^1_{loc}(\R\times S^1).
$$
\end{rmk}

 \begin{proof}[Proof of Lemma \ref{cor:new}]
 Writing the energy on a connecting cylinder as in \eqref{eq:energy-loss-without-norm}
 and combining this with the estimate \eqref{claim1} on the angular energy as well as the assumption \eqref{ass:almost-harmonic}
immediately implies that  for $m\in\{1,\ldots,\bar m\}$
\beq \label{eq:energy-cyl-proof}
\lim_{\la\to\infty}\limsup_{i\to\infty} \bigg\vert E(u_i;
\Cyl(s_i^{m-1}+\la,s_i^m-\la)) 
-\frac12\int_{\Cyl(s_i^{m-1}+\lambda,s_i^m-\lambda)}(\abs{\partial_s u_i}^2-\abs{\partial_\theta u_i}^2) \,d\theta ds \bigg\vert=0.\eeq

We then note that Remark \ref{rem:conv-Hopf}, applied to the shifted maps 
 $u_i^m(s,\theta)=u_i(s_i^m+s,\theta)$, $m\in \{1,\ldots, \bar m-1\}$,
yields that for every $\lambda>0$
 $$\limsup_{i\to\infty}\int_{\Cyl(s_i^{m}-\lambda,s_i^m+\lambda)}
 \big\vert\abs{\partial_s u_i}^2-\abs{\partial_\theta u_i}^2\big\vert \,d\theta ds\leq 
 \frac12\norm{\Phi(u_i^m)}_{L^1(\Cyl_\lambda)}\to 0.$$
Combined with \eqref{eq:energy-cyl-proof} this gives the claim.
 \end{proof}

The key step needed to derive Theorem \ref{mainthm} from Theorem \ref{new-thm} is to use the Poincar\'e inequality for quadratic differentials to get control on the Hopf differential.

\begin{lemma}
\label{hopf_decay_lem}
In the setting of Theorem \ref{mainthm}, the Hopf differential decays according to
\beqs
\norm{\Phi(u_i,g_i)}_{L^1(M,g_i)} \to 0,\eeqs 
as $i\to\infty$.
\end{lemma}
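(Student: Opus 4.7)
The plan is to decompose $\Phi(u_i,g_i)=P_{g_i}(\Phi(u_i,g_i))+\Psi_i$, where $\Psi_i$ is the $L^2$-orthogonal complement of the projection onto holomorphic quadratic differentials, and to estimate each summand in $L^1(M,g_i)$ separately using the two pieces of the almost-minimal hypothesis \eqref{ass:almost-minimal}.

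For the projection term I would use the hypothesis $\|P_{g_i}(\Phi(u_i,g_i))\|_{L^2(M,g_i)}\to 0$ directly. Because each $g_i$ is hyperbolic on a genus $\gamma\geq 2$ surface it has fixed area $4\pi(\gamma-1)$, so Cauchy--Schwarz upgrades this to
\[
\|P_{g_i}(\Phi(u_i,g_i))\|_{L^1(M,g_i)}\leq \bigl(4\pi(\gamma-1)\bigr)^{1/2}\|P_{g_i}(\Phi(u_i,g_i))\|_{L^2(M,g_i)}\to 0.
\]

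For the complementary piece $\Psi_i$, I would invoke the Poincar\'e estimate for quadratic differentials established in \cite{RT2}, which provides a bound of the schematic form $\|\Psi_i\|_{L^1(M,g_i)}\leq C\,\|\dbar\Psi_i\|_{L^1(M,g_i)}$ with a constant $C$ that remains bounded as the hyperbolic metrics $g_i$ degenerate through $\ell(g_i)\to 0$. Since holomorphic quadratic differentials lie in the kernel of $\dbar$, we have $\dbar\Psi_i=\dbar\Phi(u_i,g_i)$, and a direct computation in an isothermal coordinate $z=x+iy$ (using that $4u_{z\bar z}$ is the extrinsic Laplacian and that the second fundamental form $A(u)$ is normal to $T_uN$) shows that $\dbar\Phi(u,g)$ is pointwise of the form $\langle\tau_g(u),\partial u\rangle$. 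Combined with Cauchy--Schwarz and the uniform energy bound $E(u_i,g_i)\leq E_0$ this gives
\[
\|\dbar\Phi(u_i,g_i)\|_{L^1(M,g_i)}\leq C\,\|\tau_{g_i}(u_i)\|_{L^2(M,g_i)}\,\sqrt{2E_0}\longrightarrow 0,
\]
again by \eqref{ass:almost-minimal}. Adding the two bounds then yields the claim.

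The only genuinely delicate step is the uniformity of the Poincar\'e constant along the degenerating sequence $g_i$: this is exactly the content of \cite{RT2} and is essential because the finite-dimensional space of holomorphic quadratic differentials on $(M,g_i)$ changes dramatically as $\ell(g_i)\to 0$, with new concentrated directions appearing along each pinching collar; without that uniform estimate, smallness of the $L^2$-projection and of $\dbar\Phi$ would not suffice to control $\Phi$ itself. Once the $L^1$ bound is established on all of $M$ it restricts in particular to each collar $\Col(\ell_i)\subset M$, which is precisely what is needed to verify assumption \eqref{ass:Hopf-0} when applying Theorem \ref{new-thm} in the proof of Theorem \ref{mainthm}.
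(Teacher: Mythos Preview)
Your proof is correct and is essentially the same as the paper's own argument: both decompose $\Phi=P_g(\Phi)+(\Phi-P_g(\Phi))$, control the first piece in $L^1$ via Cauchy--Schwarz and the fixed area, and control the second via the uniform Poincar\'e estimate of \cite{RT2} together with the bound $\|\dbar\Phi\|_{L^1}\leq C E_0^{1/2}\|\tau_g(u)\|_{L^2}$. Your additional remarks on why uniformity of the Poincar\'e constant is the essential point, and on how the conclusion feeds into hypothesis \eqref{ass:Hopf-0}, are accurate and match the paper's use of the lemma.
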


\begin{proof}
The Poincar\'{e} estimate for quadratic differentials \cite{RT2}
states that for any quadratic differential $\Phi$ on the domain $(M,g)$, and in particular for the Hopf differential $\Phi$, we have
\begin{equation}
\label{poincare}
\|\Phi-P_g(\Phi)\|_{L^1}\leq C\|\overline\partial\Phi\|_{L^1},
\end{equation}
where $C$ depends only on the genus $\gamma\geq 2$ of $M$ and is thus in particular \emph{independent} of $g$.
By \eqref{ass:almost-minimal}, as the area of $(M,g_i)$ is fixed, we know that
$$\|P_{g_i}(\Phi(u_i,g_i))\|_{L^1}\to 0,$$
and by direct computation (see e.g. \cite[Lemma 3.2]{RT}) we know that
$$\|\overline\partial\Phi(u_i,g_i)\|_{L^1}\leq CE_0^\half\|\tau_{g_i}(u_i)\|_{L^2},$$
where $E_0$ is an upper bound on the energies $E(u_i,g_i)$. 
Therefore, by \eqref{ass:almost-minimal} we find that 
$$\|\overline\partial\Phi(u_i,g_i)\|_{L^1}\to 0,$$
and we conclude from \eqref{poincare} that
$$\norm{\Phi(u_i,g_i)}_{L^1(M,g_i)} \to 0,$$
as required.
\end{proof}

Based on Lemma \ref{hopf_decay_lem} and Theorem \ref{new-thm} we can now give the

\begin{proof}[Proof of Theorem \ref{mainthm}]
To derive Theorem \ref{mainthm} from Theorem \ref{new-thm}, we want to view the restriction of 
the maps $u_i$ to the collars $\Col(\ell_i)$ as maps from \textit{euclidean} cylinders $\Cyl_{X_i}$, $X_i=X(\ell_i)$,
which are almost harmonic (with respect to $g_0$).

We first remark that $E(u_i; \Cyl_{X_i})$ is bounded uniformly thanks to the conformal invariance of the energy and the assumed uniform bound on $E(u_i,g_i)$.

We then note that the conformal factors of the metrics 
$\rho^2(s)(ds^2+d\theta^2)$ of
the hyperbolic collars $(\Col(\ell),\rho^2 g_0)$, $\ell\in (0,2\arsinh(1))$, described in Lemma \ref{lemma:collar} are  bounded uniformly by   
$$
\rho(s)\leq \rho(X(\ell))=\frac{\ell}{2\pi\tanh\frac{\ell}{2}}
\leq \frac{\sqrt{2}\arsinh(1)}{\pi}\leq 1.
$$
Given that the norm of the tension scales as 
\beq
\label{3andhalf}
\norm{\tau_g(u)}_{L^2(\Cyl,g)}=\norm{\rho^{-1}\tau(u)}_{L^2(\Cyl)}
\eeq
under a conformal change of the metric $g=\rho^2 g_0$,
we thus obtain from \eqref{ass:almost-minimal} that 
$$
\norm{\tau(u_i)}_{L^2(\Cyl_{X_i})}\leq
\norm{\tau_{g_i}(u_i)}_{L^2(\Col(\ell_i),g_i)}\to 0.
$$

We furthermore note that the $L^1$-norm of quadratic differentials is invariant under conformal changes of metric, compare \eqref{eq:L1-Phi-scaling}, 
and that the Hopf-differential depends only on the conformal structure.
Lemma \ref{hopf_decay_lem} thus yields
$$
\norm{\Phi(u_i)}_{L^1(\Cyl_{X_i})}=\norm{\Phi(u_i,g_i)}_{L^1(\Col(\ell_i),g_i)}\to 0.$$

Consequently all assumptions of  Theorem \ref{new-thm}, including \eqref{ass:Hopf-0}, are satisfied and 
Theorem \ref{mainthm} follows.
\end{proof}
\begin{proof}[Proof of Theorem \ref{energy_id_thm}]
Continuing on from Remarks \ref{rmk:first-part-proof} and \ref{collar_intro_rmk} 
it remains to analyse the energy on the degenerating collars
$\Col(\ell_i^j)$. After passing to a subsequence, Theorem \ref{mainthm} gives convergence to a full bubble branch on each of these collars so that
the energy on $\de\thin(M,g_i)
=\bigcup_j \Col_i^{\de, j}$ satisfies
$$\lim\limits_{\de\downarrow 0}\lim_{i\to \infty}\ E(u_i; \de\thin(M,g_i))=\lim\limits_{\de\downarrow 0}\lim_{i\to \infty}\sum_j E(u_i; \Cyl(-X(\ell_i^j)+\la_{\de}(\ell_i^{j}), X(\ell_i^j)-\la_{\de}(\ell_i^{j}))=\sum_k E(\Om^k).$$
Here we use that $\la_{\de}(\ell_i^j):=X(\ell_i^j)-X_{\de}(\ell_i^j)\geq \frac{\pi}{\de}-C\to \infty$ as $\de\to 0$, compare \eqref{subcyl} and \cite[Prop. A.2]{RT2}, and we denote by $\{\Om_k\}$ the collection of all bubbles developing on the degenerating collars.
\end{proof}

\begin{proof}[Proof of Theorem \ref{thm:torus}]
Let $(u_i,g_i)$ be as in the theorem. 
As before we will think of $(T^2,g_i)$ as a quotient of $(\R\times S^1,\frac1{2\pi b_i} g_0)$ where $(s,\theta)\sim (s+b_i,\theta+a_i)$ are identified and where
we lift the map $u_i$ to the whole cylinder $\R\times S^1$ without changing the notation. Note that $\ell(g_i)\to 0 $ implies $b_i\to \infty$.

Since the maps $u_i$ satisfy \eqref{ass:almost-minimal} we have, by 
\eqref{3andhalf}, 
$$
\norm{\tau(u_i)}_{L^2(\Cyl(0,b_i))}
=\left(\frac1{2\pi b_i}\right)^\frac{1}{2} \norm{\tau_{g_i}(u_i)}_{L^2(T^2,g_i)}\to 0.
$$

The main difference compared with the proof of Theorem \ref{mainthm} is that we are not allowed to appeal to the Poincar\'e inequality since
\eqref{poincare}
is not valid 
 with a uniform constant for tori.

Instead 
we shall use the fact that 
all holomorphic quadratic differentials on $T^2$
are of the form $c\cdot dz^2, z=s+i\theta,  c\in\C$, so that
the projection of the  Hopf differential is 
\beq  
P_g(\Phi)= \bigg(\frac{1}{2\pi b_i}\int_{\Cyl(0,b_i)} \abs{\partial_s u_i}^2-\abs{\partial_\th u_i}^2-2i\langle \partial_\th u_i,\partial_s u_i\rangle d\theta ds\bigg)\cdot dz^2.  \label{eq:Hopf-diff-torus} \eeq

As the tori $(T^2,g_i)$ have unit area,
we furthermore know that 
$\norm{dz^2}_{L^2(T^2, g_i)}=\abs{dz^2}_{g_i} =4\pi b_i$, by \eqref{eq:normalisation-dz2}. Thus \eqref{ass:almost-minimal} implies that 
\beq \label{est:integral-phi-torus}
\int_{\Cyl(0, b_i)} \abs{\partial_s u_i}^2-\abs{\partial_\theta u_i}^2 d\theta ds \to 0 \text{ as } i\to\infty.
\eeq 
Let us first assume that, after passing to a subsequence, we can find a sequence $s_i^0\in [0,b_i)$ 
so that the restrictions of the shifted maps $ u_i(s_i^0+\cdot,\cdot)$ to $\Cyl_{\frac{b_i}{2}}$ converge to a \textit{nontrivial} bubble branch in the sense of Definition \ref{bb_def}.
We can then analyse the restriction of the maps $ u_i(s_i^0+\tfrac12 b_i+\cdot ,\cdot)$ to the cylinder $\Cyl_{\frac{b_i}{2}}$ with Theorem \ref{new-thm} and, after passing to a further subsequence, obtain that the maps $u_i$ converge  to a nontrivial full bubble branch as described in Definition \ref{def:fbb-torus}.

To compute the loss of energy on the connecting cylinders we can think of $u_i$ as a periodic map from $\Cyl_{s_i^0-b_i,s_i^0+2b_i}$. In the  convergence to 
a full bubble branch for these extended maps also the cylinders $\Cyl(s_i^{0},s_i^1)$ and 
$\Cyl(s_i^{\bar m-1},s_i^{\bar m})$, $s_i^{\bar m}=s_i^0+b_i$, obtained above appear as connecting cylinders between two bubble branches rather than cylinders 
connecting a bubble branch with the end of the domain. As such  
Lemma \ref{cor:new} implies that
\beqs 
\lim_{\la\to\infty}\limsup_{i\to\infty} 
\bigg|E(u_i;
\Cyl(s_i^{m-1}+\la,s_i^m-\la)) -\frac12\int_{\Cyl(s_i^{m-1},s_i^m)} (\abs{\partial_s u_i}^2-\abs{\partial_\theta u_i}^2) \,d\theta ds \bigg|=0
\eeqs
is valid not only for $m=2,\ldots, \bar m-1$ but indeed for $m=1,\ldots \bar m$.
The total amount of energy lost on the connecting cylinders is thus
 \beqas 
 \lim_{\lambda\to\infty}\limsup_{i\to \infty}\sum_{m=1}^{\bar m} E(u_i;\Cyl(s_i^{m-1}+\lambda,s_i^{m}-\lambda))
=
 \limsup_{i\to\infty} \half\int _{\Cyl(0,b_i)}\abs{\partial_s u_i}^2-\abs{\partial_\th u_i}^2 \, dsd\th=0.
\eeqas

Assume instead that we cannot find, even for a subsequence, numbers $s_i^0$ so that $ u_i(s_i^0+\cdot,\cdot)$ converges to a \textit{nontrivial} bubble branch. In this case 
we apply Theorem \ref{new-thm} 
twice on   
$ \Cyl_{\half b_i}$, once 
for $u_i$ itself and once for the shifted map $\hat u_i:=u_i(\cdot+ \half b_i,\cdot)$. The assumption means that the set of points $s_i^m$, $m=1,..,\bar m-1$, where nontrivial bubble branches develop, is empty, i.e. $\bar m=1$. 
Part 2 of Theorem \ref{new-thm} thus implies that
the whole torus is mapped close to an $i$-dependent curve as described in the theorem. 
But  Theorem \ref{existing_theory} also implies that, after passing to a subsequence, the maps $u_i$ converge to a bubble branch, which must be trivial by assumption, and thus that  
$E(u_i;\Cyl_\lambda)\to 0$ for every $\lambda>0$. Combined with Lemma \ref{cor:new} and \eqref{est:integral-phi-torus} we thus obtain
\beqas 
\limsup_{i\to \infty} E(u_i,g_i)=&\lim_{\la\to \infty}\limsup_{i\to\infty}E(u_i; \Cyl(\lambda, b_i-\lambda))
=\frac12\limsup_{i\to\infty}\int_{\Cyl(0, b_i)} \abs{\partial_s u_i}^2-\abs{\partial_\theta u_i}^2 d\theta ds=0
\eeqas
as claimed. 
\end{proof}

\begin{rmk}\label{rem:Ding-Li-Liu}
When we apply Theorem \ref{thm:torus} to
the flow \eqref{flow}, using Proposition \ref{get_ti}, we obtain 
a no-loss-of-energy result from \eqref{torus_no_loss_of_energy} for $M$ a torus.
An assertion of this form
was already made in \cite{Ding-Li-Liu}, where a claim is made that the Hopf differential converges to zero in $L^1$, based on the knowledge that its \emph{`average'} converges to zero.
(Note that taking our viewpoint, the fact that the average of the Hopf differential converges to zero would be interpreted 
as the projection of the Hopf differential onto the space of holomorphic quadratic differentials converging to zero.)
However, our take on the genus one case, in the absence of a uniform Poincar\'e estimate available in the higher genus case, is that we can only deduce that the $L^1$ norm of the Hopf differential converges to zero as a \emph{consequence} of no-loss-of-energy, and along the way we need to appeal to the $L^1$ smallness of the Hopf differential of bubble branches from Remark
\ref{rem:conv-Hopf}.
\end{rmk}

\section{Construction of a nontrivial neck}
\label{neck_example_sect}

The main purpose of this section is to prove Theorem \ref{neck_thm}, but we first record the following elementary computations.
\begin{proof}[Proof of Proposition \ref{nongeod}]
To ease notation, we drop all subscripts $i$ for the following computations. We also simplify matters by embedding $(N,G)$ isometrically in some Euclidean space and composing $u$ with that embedding.
The energy is conformally invariant, thus we calculate with respect to the flat metric 
$$
E(u;\Cyl_{X}) = \frac{1}{2} \int_{S^1}\int_{-X}^{X} |u_s|^2 ds d\theta \leq  \frac{CL^2}{X}.
$$
Next we recall that the volume element on the hyperbolic cylinder is given by $\rho^2 ds d\theta$, and compute using \eqref{3andhalf}
$$
\|\tau_g(u)\|_{L^2(\Cyl_X,g)}^2
=\|\rho^{-1}\tau(u)\|_{L^2(\Cyl_X)}^2
\leq C\int_{-X}^X \rho^{-2}|u_{ss}|^2ds
\leq \frac{CL^4}{X^4}\int_{-X}^X \rho^{-2}ds
\leq \frac{CL^4}{X^4}X\ell^{-2}\leq \frac{CL^4}{X}
$$
because $\ell^{-1}\leq CX$ on a collar.
We now compute the $L^2$-norm of $\Phi(u,g)$.  With $z = s + i \theta$, $\Phi(u,g) = |u_s|^2 dz^2$. Recalling that $|dz^2|_g = 2 \rho^{-2}$ (see \ref{eq:normalisation-dz2}), we find
$$
\norm{\Phi(u,g)}_{L^2(\Cyl_X,g)}^2 =
\int_{\Cyl_X} |u_s|^4 4\rho^{-4}\rho^2ds\,d\th
\leq C\frac{L^4}{X^4}\int_{-X}^X  \rho^{-2}ds
\leq C\frac{L^4}{X}
$$
as above.
\end{proof}

The remainder of this section is devoted to the proof of Theorem \ref{neck_thm}, constructing a flow that develops a nontrivial neck. 
We opt for a general approach, although essentially explicit constructions are also possible.
To this end, consider  any closed oriented surface $M$ of genus at least 2, and 
take the target $N$ to be $S^1$. Choose a smooth initial map $u_0:M \to S^1$ that maps some closed loop $\alpha$ on $M$ exactly once around $S^1$, and take any hyperbolic metric $g_0$ on $M$.
We claim that the subsequent flow \eqref{flow} develops a nontrivial neck.  

The first key point is that since $S^1$ has nonpositive sectional curvature, the regularity theory from \cite[Theorem 1.1, Theorem 1.2]{RTnonpositive} applies, so the flow exists for all time.

The second key point is that because the target is $S^1$, there do not exist any branched minimal immersions, except if one allows constant maps.
If no collar degenerated in this flow, i.e. if there were a uniform positive lower bound for the lengths of all closed geodesics in $(M,g(t))$,
then by the results in \cite{RT}, the map $u_0$ would be homotopic to the constant map, which is false by hypothesis.

Therefore there are degenerating collars, and we can analyse them with 
Theorems \ref{thm:asymptotics1_new} and \ref{mainthm} (using Proposition \ref{get_ti});
we next demonstrate that a nontrivial neck as described in Theorem \ref{neck_thm} forms. If not, the maps from each degenerating collar would become close to  constant maps, for a subsequence. By \cite[Theorem 1.1]{RTZ} this would imply that $u_0$ would be decomposed into constant maps, and thus in particular it would be homotopic to a constant map, which again is false by hypothesis.

We have proved that our flow develops a neck in the sense that \eqref{neck_develops} holds for some degenerating collar, and some $m$.
By Theorem \ref{mainthm} the image of the subcollar 
$\Cyl(s_i^{m-1}+\la,s_i^m-\la)$
will be close to a curve for each $i$.
However, the limiting endpoints \eqref{pplus} and \eqref{pminus} of the curves would not in general be distinct, i.e.
\eqref{ps_unequal} would fail in general.

To make a construction in which \eqref{ps_unequal} must hold for some collar and some $m$, it suffices to adjust our construction so that again all extracted branched minimal immersions must be constant, but so that the union of the images is not just one point. By our theory, the connecting cylinders will thus be mapped close to curves connecting these distinct image points so a nontrivial neck with distinct end points must develop. 

To achieve this, we will lift the whole flow to a finite cover $\overline{M}$ of $M$. Given such a cover, we need to check that the lifted flow still satisfies \eqref{flow}. Locally, the lifting does not affect the tension field of $u$, so the first equation in \eqref{flow} does not present any issues. However, the projection operator $P_g$ will be different on the cover $\overline{M}$, as new holomorphic quadratic differentials are introduced in addition to the lifts of the original holomorphic quadratic differentials. But as we check in the following lemma, it turns out that these new directions are necessarily $L^2$-orthogonal to the (lifted) Hopf differential $\Phi$, and indeed to any quadratic differential on the cover that is obtained by lifting a quadratic differential on $M$.

\begin{lemma} 
\label{lem:lifting}
Let $(\overline{M} ,\overline{g})$ be a smooth cover of $(M,g)$ with a finite group of deck transformations $G$ associated with it. Given a holomorphic quadratic differential $\Psi$ on $\overline{M}$, define
	$$\Proj(\Psi) = \frac{1}{|G|} \sum\limits_{\mathfrak{g} \in G} \mathfrak{g}^*(\Psi).$$
Note that this defines a projection operator, and the image can be identified with the space of holomorphic quadratic differentials on $M$. Then for any $\Psi \in \ker(\Proj)$ and $\Phi$ a quadratic differential on $\overline{M}$ arising via lifting a quadratic differential on $M$, thus invariant under $G$, we have $\langle \Psi, \Phi \rangle_{L^2} = 0$.
\end{lemma}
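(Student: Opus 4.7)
The plan is to exploit the $G$-invariance of the relevant geometric data: since the elements of $G$ are deck transformations of a Riemannian covering, each $\mathfrak{g} \in G$ is an isometry of $(\overline{M},\overline{g})$, and in particular pullback by $\mathfrak{g}$ preserves the pointwise inner product on quadratic differentials as well as the volume element $dv_{\overline g}$. Therefore pullback by any $\mathfrak{g} \in G$ preserves the $L^2$ inner product $\langle \cdot,\cdot\rangle_{L^2}$ on quadratic differentials over $\overline{M}$.

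The main observation is then that the hypothesis on $\Phi$ — that it arises as the lift of a quadratic differential on $M$ — is exactly the statement that $\mathfrak{g}^* \Phi = \Phi$ for every $\mathfrak{g} \in G$. Combined with the isometry property this gives, for each individual $\mathfrak{g}$,
\begin{equation*}
\langle \Psi,\Phi\rangle_{L^2} \;=\; \langle \mathfrak{g}^*\Psi,\mathfrak{g}^*\Phi\rangle_{L^2} \;=\; \langle \mathfrak{g}^*\Psi,\Phi\rangle_{L^2}.
\end{equation*}

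Averaging this identity over $G$ and using linearity of the inner product in its first slot, together with the definition of $\Proj$, yields
\begin{equation*}
\langle \Psi,\Phi\rangle_{L^2} \;=\; \frac{1}{|G|}\sum_{\mathfrak{g}\in G}\langle \mathfrak{g}^*\Psi,\Phi\rangle_{L^2} \;=\; \langle \Proj(\Psi),\Phi\rangle_{L^2},
\end{equation*}
so the assumption $\Psi \in \ker(\Proj)$ forces $\langle \Psi,\Phi\rangle_{L^2}=0$. There is no genuine obstacle to overcome; the only minor point that deserves being stated explicitly is why $\Proj$ is indeed a projection whose image is identified with holomorphic quadratic differentials on $M$, which follows from $\mathfrak{h}^*\Proj(\Psi)=\Proj(\Psi)$ for every $\mathfrak{h}\in G$ (by re-indexing the sum, since $G\to G$, $\mathfrak{g}\mapsto \mathfrak{g}\mathfrak{h}$, is a bijection) together with the fact that $G$-invariant quadratic differentials on $\overline{M}$ descend to $M$.
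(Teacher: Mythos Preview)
Your proof is correct and follows essentially the same approach as the paper: both use that deck transformations are isometries to obtain $\langle \Psi,\Phi\rangle_{L^2} = \langle \mathfrak{g}^*\Psi,\Phi\rangle_{L^2}$ from the $G$-invariance of $\Phi$, and then sum (or average) over $G$ to conclude. The only cosmetic difference is that the paper sums and you average, and you spell out a little more explicitly why pullback by $\mathfrak{g}$ preserves the $L^2$ pairing and why $\Proj$ is a projection onto the $G$-invariant differentials.
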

\begin{proof}
Let $\Psi \in \ker(\Proj)$, then $\sum\limits_{\mathfrak{g} \in G}{\mathfrak{g}^*(\Psi)} = 0$. Furthermore, $$\langle\Psi, \Phi \rangle = \langle \mathfrak{g}^*(\Psi),\mathfrak{g}^*(\Phi) \rangle = \langle \mathfrak{g}^*(\Psi),\Phi \rangle$$ for any $\mathfrak{g} \in G$. Summing, we obtain
$$ 0= \left\langle \sum_{\mathfrak{g} \in G} \mathfrak{g}^*(\Psi), \Phi \right\rangle = |G| \langle \Psi, \Phi \rangle $$ 
as required.
\end{proof}

As a consequence of this lemma, given a suitable cover $\overline{M}$ of $M$, we can  lift a flow on $M$ to $\overline M$ and the flow equations \eqref{flow} will still hold. 

Returning to our construction,
we may assume that all the branched minimal immersions 
are mapping to the same limit point $p$ (or we are done already).
The aim is now to use a lifting construction, justified by the above, to obtain a lifted flow with 
the images of the corresponding branched minimal immersions being the
two \emph{different} lifts of $p$ in a double cover of the target.

To this end, fix an arbitrary base point $x_0$ on M. Now consider the index 2 subgroup $H$ of $\pi_1(M,x_0)$ consisting of loops whose images under the initial map $u_0$ go round the target $S^1$ an even number of times.
We can pass to a (double) cover $q:\overline{M} \to M$ of the domain satisfying $q_*\left(\pi_1\left( \overline{M},\overline{x}_0 \right) \right) = H$ (see e.g. \cite[Prop. 1.36]{Hatcher})
and lift $u_0$ to a map
$\overline{u}_0: \overline{M} \to S^1$. By the choice of $H$ we can further lift $\overline{u}_0$ to map into a (connected) double cover of the target $S^1$ (e.g. \cite[Prop. 1.33]{Hatcher}).
Using Lemma \ref{lem:lifting}, we can in fact lift the whole flow like this to give a new solution, and analyse it with Theorem \ref{mainthm} for a subsequence of the times $t_i$ at which we analysed the flow on $M$.

It suffices to show that the images of the branched minimal immersions we can construct from the lifted flow consist of \emph{both} of the lifts of $p$, not just one. These distinct points can then only be connected by nontrivial necks.

To see this, note that from the analysis of the original flow on $M$ with Theorem 1.1 from \cite{RTZ} (i.e. with Proposition \ref{get_ti} and Theorem \ref{thm:asymptotics1_new}), we can find some $\delta > 0$ sufficiently small such that for sufficiently large $i$, the $\delta$-thin part of $(M,g(t_i))$ will  consist of a (disjoint) union of (sub)collars that eventually degenerate.
For large enough $i$, the image of the $\delta$-thick part of $(M,g(t_i))$ will be contained in a small neighbourhood of $p$. For each such large $i$, we pick a point $y_i$ in the $\delta$-thick part, and deform $\al$ to pass through $y_i$.
We view $\al$ then as a path that starts and ends at $y_i$, and by assumption, the composition $u_0\circ\al$ takes us 
exactly once round the target $S^1$.
In particular, as we pass once round the lift of $\al$, we move from one lift of $y_i$ to the other, and the flow map moves from being close to one lift of $p$ to being close to the other lift.

In particular, the branched minimal immersions in the lifted picture map to both lifts of $p$ as required.

\appendix
\section{Appendix}
We will need Keen's `Collar lemma' throughout the paper.

\begin{lemma}[\cite{randol}] \label{lemma:collar}
Let $(M,g)$ be a closed hyperbolic surface and let $\si$ be a simple closed geodesic of length $\ell$. Then there is a neighbourhood around $\si$, a so-called collar, which is isometric to the 
cylinder 
$\Col(\ell):=(-X(\ell),X(\ell))\times S^1$
equipped with the metric $\rho^2(s)(ds^2+d\theta^2)$ where 
$$\rho(s)=\frac{\ell}{2\pi \cos(\frac{\ell s}{2\pi})} 
\qquad\text{ and }\qquad  
X(\ell)=\frac{2\pi}{\ell}\left(\frac\pi2-\arctan\left(\sinh\left(\frac{\ell}{2}\right)\right) \right).$$ 
The geodesic $\si$  corresponds to the circle 
$\{s=0\}\subset \Col(\ell)$. 
\end{lemma}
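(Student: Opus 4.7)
The plan is to construct the collar coordinates explicitly in the universal cover, then pin down the precise embeddedness radius using a classical hyperbolic trigonometric argument.

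First, I would lift $\sigma$ to a geodesic axis $\tilde\sigma \subset \mathbb{H}^2$. The stabilizer of $\tilde\sigma$ in the covering group is infinite cyclic, generated by a hyperbolic isometry $T$ of translation length $\ell$. On the intermediate quotient $\mathbb{H}^2/\langle T\rangle$, I introduce Fermi coordinates $(r,\theta)$, with $r \in \mathbb{R}$ the signed hyperbolic distance to $\tilde\sigma$ and $\theta \in \mathbb{R}/\ell\mathbb{Z}$ an arclength parameter along it. A direct computation gives the hyperbolic metric $g = dr^2 + \cosh^2(r)\, d\theta^2$, with $\sigma$ corresponding to $\{r=0\}$.

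Second, I would perform the conformal change of variables needed to bring the metric into the form stated in the lemma. Rescaling $\varphi := 2\pi\theta/\ell$ (so $\varphi \in [0,2\pi)$) converts the metric to $dr^2 + (\ell/2\pi)^2 \cosh^2(r)\, d\varphi^2$. Writing this as $\rho^2(s)(ds^2 + d\varphi^2)$ forces $\rho = (\ell/2\pi)\cosh r$ and $ds = dr/\rho$. Integrating, using $\int \operatorname{sech}(r)\, dr = \arctan(\sinh r)$, gives $s = (2\pi/\ell)\arctan(\sinh r)$, so that $\cosh r = 1/\cos(\ell s/(2\pi))$, which yields the stated formula for $\rho(s)$.

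Third, I identify the maximal half-width $X(\ell)$. Under the change of variables above, a Fermi half-width $R > 0$ corresponds to $X = (2\pi/\ell)\arctan(\sinh R)$, and the asserted value $X(\ell) = (2\pi/\ell)(\pi/2 - \arctan(\sinh(\ell/2)))$ translates, via the elementary identity $\arctan(x) + \arctan(1/x) = \pi/2$ for $x>0$, into the classical Keen condition $\sinh R \cdot \sinh(\ell/2) = 1$. To verify embeddedness at this radius, I argue by contradiction: if the projection $\{|r| < R\}/\langle T\rangle \to M$ failed to be injective, there would exist $\gamma$ in the covering group, not in $\langle T\rangle$, such that the $R$-neighborhoods of $\tilde\sigma$ and of $\gamma\tilde\sigma$ meet in $\mathbb{H}^2$. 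Simplicity of $\sigma$ forces $\tilde\sigma$ and $\gamma\tilde\sigma$ to be disjoint; letting $d$ denote the length of their common perpendicular, a standard computation in the right-angled pentagon built from subarcs of $\tilde\sigma$, $\gamma\tilde\sigma$, the common perpendicular, and one $T$-period yields $\sinh(d/2)\sinh(\ell/2) \geq 1$, hence $d \geq 2R$, a contradiction.

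The main obstacle is the trigonometric inequality in this last step, which is the heart of Keen's collar lemma. Everything else reduces to explicit ODE integration on the cylinder. The bound depends essentially on simplicity of $\sigma$: without it the relevant pentagon would degenerate (the two lifts could intersect rather than admit a real common perpendicular), and the inequality would fail.
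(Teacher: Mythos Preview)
The paper does not give its own proof of this lemma: it is stated in the appendix as a known result, with attribution to \cite{randol}, and is used as a black box throughout. So there is no argument in the paper to compare your proposal against.

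That said, your sketch is a correct outline of the standard proof. The Fermi-coordinate computation and the conformal change of variable are exactly right and yield the stated $\rho(s)$; the translation of $X(\ell)$ into the Keen condition $\sinh R\cdot\sinh(\ell/2)=1$ via $\arctan x+\arctan(1/x)=\pi/2$ is clean. For the embeddedness step you might tighten one point: besides the case where $\tilde\sigma$ and $\gamma\tilde\sigma$ are disjoint (hence ultraparallel, giving the pentagon inequality), you should also note that they cannot be asymptotic at infinity, since the deck group of a closed hyperbolic surface contains no parabolics, and they cannot coincide since the stabiliser of $\tilde\sigma$ is exactly $\langle T\rangle$. With those remarks the trichotomy for pairs of geodesics in $\mathbb{H}^2$ reduces to the ultraparallel case, and your pentagon argument goes through.
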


For $\de\in (0,\arsinh(1))$, the $\de\thin$ part of a collar is given by the subcylinder 
\beq
\label{subcyl}
(-X_\de(\ell), X_\de(\ell)) \times S^1 \subseteq \Col(\ell),    
\qquad\text{where}\qquad
X_\de(\ell)=  \frac{2\pi}{\ell}\left(\frac{\pi}{2}-\arcsin \left(\frac{\sinh(\frac{\ell}{2})}{\sinh \delta}\right) \right)\eeq
 for $\de\geq \ell/2$, respectively zero for smaller values of $\delta$.

To analyse sequences of degenerating hyperbolic surfaces we make repeatedly use of the differential geometric version of the Deligne-Mumford compactness theorem.
\begin{prop} {\rm (Deligne-Mumford compactness, cf. \cite{Hu}.)}  \label{Mumford}
Let $(M,g_{i},c_{i})$ be a sequence of closed hyperbolic Riemann surfaces of genus $\gamma\geq2$ that degenerate in the sense that $\liminf_{i\to \infty}\inj_{g_{i}}M=0$.
Then, after selection of a subsequence,
$(M,g_{i},c_{i})$ converges to a complete hyperbolic punctured Riemann surface $(\Sigma,h,c)$, where $\Si$ is obtained from $M$ by removing a collection 
$\mathscr{E}=\{\sigma^{j}, j=1,...,k\}$ of $k$ pairwise disjoint, homotopically nontrivial, simple closed curves on $M$ and the convergence is as follows:

For each $i$ there exists a collection $\mathscr{E}_{i}=\{\sigma^{j}_{i}, j=1,...,k\}$ of
pairwise disjoint simple closed geodesics on $(M,g_{i},c_{i})$
of length
$\ell(\sigma_{i}^{j})=:\ell_{i}^{j} \rightarrow 0\text{ as }i \rightarrow \infty$,
and a diffeomorphism $F_i:M\to M$ mapping $\si^j$ onto $\si_i^j$,
such that the restriction
$f_i=F_i|_\Si:\Si\rightarrow M\setminus \cup_{j=1}^k\sigma_{i}^{j} $ satisfies
$$(f_i)^{*}g_{i} \rightarrow h \text{ and } (f_i)^{*}c_{i}\to c \text{ in } C_{loc}^{\infty}\text{ on }\Sigma.$$
\end{prop}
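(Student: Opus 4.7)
The plan is to obtain the limit by separating the thick and thin parts of $(M,g_i)$, using the Margulis–Keen collar structure on the thin part and Cheeger–Gromov style compactness on the thick part, and then to glue these descriptions via a sequence of marking diffeomorphisms.

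First I would fix a Margulis constant, most conveniently $\ep_0=\arsinh(1)$, so that for every $i$ the $\ep_0$-thin part of $(M,g_i)$ consists of a disjoint union of standard collars $\Col(\ell_i^j)$ around finitely many simple closed geodesics $\si_i^j$, as provided by Lemma \ref{lemma:collar}. The number of such short geodesics on a closed hyperbolic surface of genus $\gamma$ is bounded by $3(\gamma-1)$, so after passing to a subsequence I may assume that this number is a constant, and that each length $\ell_i^j$ converges to some $\ell^j\in[0,\ep_0]$. Reordering, I let $k$ be the number of indices with $\ell^j=0$; these will be the $\si^j$ that get collapsed, while the remaining short geodesics (with $\ell^j>0$) cause no degeneration and can be absorbed into the thick-part analysis.

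Next I would analyse the thick part $M^i_{\mathrm{thick}}:=\ep_0\thick(M,g_i)$, on which $\inj_{g_i}\geq \ep_0/2$ by construction. Because $(M,g_i)$ has constant curvature $-1$, a standard Cheeger–Gromov/ Arzelà–Ascoli argument based on uniform harmonic-radius estimates on the thick part (or, equivalently, on pulling back the hyperbolic metric to fundamental domains in $\Hbb^2$ via developing maps) shows that, after passing to a subsequence and after choosing markings by diffeomorphisms defined on an exhaustion, $(M^i_{\mathrm{thick}},g_i)$ converges in $C^\infty_{loc}$ to a (possibly disconnected) complete hyperbolic surface with boundary, or with ends that are cusps, together with its complex structure; the limiting ends arise exactly from the collapsing collars. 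In parallel, the Collar Lemma gives an explicit description of each $\Col(\ell_i^j)$, and since $\ell_i^j\to 0$ for the $k$ collapsing collars, the conformal factor $\rho(s)=\ell_i^j/(2\pi\cos(\ell_i^j s/2\pi))$ converges locally smoothly to $1/(2\pi\cos(0))\cdot\text{(rescaled limit)}$; more precisely, after a reparametrisation $s=2\pi\tau/\ell_i^j$, one sees that truncated subcollars isometrically approach two hyperbolic cusp half-cylinders $(-\infty,T]\times S^1$ and $[-T,\infty)\times S^1$ with metric $(2\pi\cos\tau)^{-2}(d\tau^2+d\theta^2)$, i.e. precisely two punctured neighbourhoods of cusps of a complete hyperbolic surface.

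Finally, I would assemble the diffeomorphisms $f_i$. On the thick part the markings come from Cheeger–Gromov; on each non-collapsing collar ($\ell^j>0$) these markings extend because those collars stabilise. For each collapsing collar I would use the explicit Collar Lemma parametrisation to identify a cylindrical neighbourhood of the puncture pair in $\Si$ with an ever-larger subcollar in $(M,g_i)$, matching the markings at the boundary with those coming from the thick part. Patching these maps by a partition of unity (and adjusting so that the pulled-back complex structures $f_i^*c_i$, which are already close to $c$ by smooth convergence of metrics, agree with $c$ asymptotically) yields $f_i$ as claimed. The complex-structure convergence $f_i^*c_i\to c$ then follows from $f_i^*g_i\to h$ together with the fact that the conformal class is determined continuously by the metric.

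The main obstacle is the simultaneous control in the transition region between the thick part and the long thin collars: one must arrange that the Cheeger–Gromov markings on the thick part are compatible, on their boundary circles, with the explicit Collar Lemma coordinates on the collapsing collars, so that the patched $f_i$ is genuinely a diffeomorphism of $\Si$ into $M\setminus\bigcup_j\si_i^j$ with $f_i^*g_i\to h$ on all of $\Si$. This is a marking/gluing issue rather than a genuine analytic difficulty, and it is handled by choosing the thick-part markings to respect the unit-speed boundary parametrisations of the $\ep_0$-horocycles at the ends, which are uniquely determined by the hyperbolic geometry.
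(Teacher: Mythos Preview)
The paper does not prove this proposition; it is stated in the appendix as a known result with a reference to Hummel's book \cite{Hu}, so there is no in-paper proof to compare against. Your sketch is a reasonable outline of the standard differential-geometric argument (thick--thin decomposition via the Collar Lemma, a uniform bound $k\leq 3(\gamma-1)$ on the number of short geodesics, Cheeger--Gromov compactness on the thick part, and explicit collar asymptotics on the thin part), and this is indeed the route taken in the reference the paper cites.

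One small correction: in your analysis of the collapsing collars, the substitution $s=2\pi\tau/\ell_i^j$ is not quite the right way to see the cusp limit. It is cleaner to fix an end, shift $s\mapsto s-X(\ell_i^j)$ (or $s\mapsto s+X(\ell_i^j)$), and observe directly from the formula for $\rho$ that the metric converges in $C^\infty_{loc}$ on $(0,\infty)\times S^1$ to the parabolic cusp metric; your rescaling as written collapses the $s$-coordinate rather than capturing the end. This is a cosmetic point and does not affect the overall strategy.
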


Finally, for metrics of the form $g=\xi^2 g_0$, $\xi:\Cyl\to \R^+$ any function  on a cylinder $\Cyl=\Cyl(s_1,s_2)$, we have
\beq \label{eq:normalisation-dz2}
\abs{dz^2}_{g}=\xi^{-2} \abs{dz^2}_{g_0}=2\xi^{-2}. 
\eeq
Thus the $L^1$ norm of a quadratic differential $\Psi=\psi dz^2$ is independent of the conformal factor:
\beq 
\label{eq:L1-Phi-scaling} \norm{\Psi}_{L^1(\Cyl,g)}=\norm{\Psi}_{L^1(\Cyl,g_0)}=2\int_\Cyl \abs{\psi} dsd\theta.
\eeq

{\sc TH\&PT: Mathematics Institute, University of Warwick, Coventry,
CV4 7AL, UK}

{\sc MR: 
Mathematical Institute, University of Oxford, Oxford, OX2 6GG, UK}


\begin{thebibliography}{99}
\parskip 1pt
\setlength{\itemsep}{1pt plus 0.3ex}

\bibitem{Chen-Tian} J. Chen and G. Tian, \emph{Compactification of moduli space of harmonic mappings.\/} Comment. Math. Helv. \textbf{74} (1999) 201--237. 

\bibitem{Ding-Li-Liu} W. Ding, J. Li and Q. Liu, \emph{Evolution of minimal torus in Riemannian manifolds.\/} Invent. Math. \textbf{165} (2006) 225--242.

\bibitem{DT}  W.~Y.~Ding and G.~Tian, \emph{Energy identity for a class of approximate harmonic maps from surfaces.\/} Comm. Anal. Geom. {\bf 3} (1995) 543--554.

\bibitem{ES} J. Eells and J. H. Sampson, \emph{Harmonic mappings of Riemannian manifolds.\/} Amer. J. Math. \textbf{86} (1964) 109--160.

\bibitem{Hatcher} A. Hatcher, `Algebraic topology.' Cambridge University Press, Cambridge (2002).

\bibitem{Hu} C. Hummel, `Gromov's compactness theorem for
pseudo-holomorphic curves.' Progress in Mathematics, \textbf{151},
Birkh\"{a}user Verlag, Basel, (1997), viii+131 pp.

\bibitem{LinWang} F.-H. Lin and C.-Y. Wang, {\em Energy identity of harmonic map flows from surfaces at finite singular time.\/} Calc. Var. Partial
Differential Equations {\bf 6} (1998) 369-380.

\bibitem{parker} T. H. Parker, \emph{Bubble Tree Convergence for Harmonic Maps.\/}
J. Diff. Geom. {\bf 44} (1996) 595--633.

\bibitem{QingTian} J. Qing and G. Tian, {\em Bubbling of the heat flows for harmonic maps from surfaces.\/} Comm. Pure Appl. Math. {\bf 50} (1997) 295-310.



\bibitem{randol} B. Randol, \emph{Cylinders in Riemann surfaces.\/}
Comment. Math. Helvetici \textbf{54} (1979) 1--5.



\bibitem{RT} M. Rupflin and P. M. Topping, \textit{Flowing maps to minimal surfaces.} To appear, American J. Math.
\url{http://arxiv.org/abs/1205.6298}

\bibitem{rupflin_existence} M. Rupflin, 
\emph{Flowing maps to minimal surfaces: Existence and uniqueness of solutions.\/} Ann. I. H. Poincar\'e-AN {\bf 31} (2014) 349--368.

\bibitem{RTZ} M. Rupflin, P. M. Topping and M. Zhu,
\textit{Asymptotics of the Teichm\"uller harmonic map flow.\/}
Advances in Math. {\bf 244} (2013) 874--893.

\bibitem{RT2} M. Rupflin and P.M. Topping, 
\emph{A uniform Poincar\'e estimate for quadratic differentials on closed surfaces.\/} 
Calc. Var. Partial Differential Equations {\bf 53} (2015), 587--604.


\bibitem{RTnonpositive} M. Rupflin and P.M. Topping, 
\emph{Teichm\"uller harmonic map flow into nonpositively curved targets.\/} Preprint (2015).
\url{http://arxiv.org/abs/1403.3195}

\bibitem{RTcontinuation} M. Rupflin and P.M. Topping, 
\emph{Global weak solutions of the Teichm\"uller harmonic map flow into general targets.\/} In preparation.


\bibitem{SU} J. Sacks and K. Uhlenbeck, \emph{The existence of minimal
immersions of 2-spheres.\/} Annals of Math. {\bf 113} (1981) 1--24.


\bibitem{struweCMH} M. Struwe, \emph{On the evolution of harmonic mappings of Riemannian surfaces.\/} Comment. Math. Helv. {\bf 60} (1985) 558--581.

\bibitem{finwind} P. M. Topping, 
\emph{Winding behaviour of 
finite-time singularities of the harmonic map heat flow.\/} 
Math. Z. {\bf 247} (2004) 279--302. 

\bibitem{toppingAnnals} P. M. Topping, \textit{Repulsion and quantization in almost-harmonic maps, and asymptotics of the harmonic map flow.} Annals of Math. {\bf 159} (2004) 465--534.

\bibitem{RFnotes} P. M. Topping, `Lectures on the Ricci flow.' L.M.S. Lecture note series {\bf 325} C.U.P. (2006). 
\url{http://homepages.warwick.ac.uk/~maseq/RFnotes.html}


\bibitem{zhu_mathz} M. Zhu, \emph{Harmonic maps from degenerating Riemann surfaces.\/} 
Math. Z. {\bf 264} (2010) 63--85.

\end{thebibliography}
\end{document}